\documentclass[11pt,a4paper]{article}

\usepackage[a4paper,margin=2.5cm]{geometry}
\usepackage{amsmath,amssymb,amsfonts,amsthm,mathtools}
\usepackage{enumerate}
\usepackage{mathrsfs}
\usepackage[colorlinks=true]{hyperref}
\usepackage{cite}

\numberwithin{equation}{section}
\allowdisplaybreaks

\theoremstyle{plain}
\newtheorem{theorem}{Theorem}[section]
\newtheorem{lemma}[theorem]{Lemma}
\newtheorem{corollary}[theorem]{Corollary}
\newtheorem{proposition}[theorem]{Proposition}

\theoremstyle{definition}
\newtheorem{definition}[theorem]{Definition}

\theoremstyle{remark}
\newtheorem{remark}[theorem]{Remark}

\newcommand{\C}{\mathbb{C}}
\newcommand{\N}{\mathbb{N}}
\newcommand{\Nzero}{\mathbb{N}_{0}}
\newcommand{\R}{\mathbb{R}}
\newcommand{\Z}{\mathbb{Z}}
\newcommand{\Tq}{\mathbb{T}_{\theta}^{n}}
\newcommand{\Tn}{\mathbb{T}^{n}}
\newcommand{\A}{\mathcal{A}_{\theta}^{\infty}}
\newcommand{\trc}{\tau}

\newcommand{\Ical}{\mathcal{I}}
\newcommand{\norm}[1]{\left\lVert #1 \right\rVert}
\newcommand{\abs}[1]{\left\lvert #1 \right\rvert}

\newcommand{\Hesst}{\mathrm{Hess}_{\theta}}

\title{Higher-Order Heat Estimates and Semilinear Heat Equations on the Noncommutative Torus}

\author{Fulin Yang, Zhipeng Yang\thanks{Corresponding author: yangzhipeng326@163.com}}

\date{}

\AtEndDocument{%
  \par
  \bigskip
  \bigskip

  \noindent
  \textbf{Fulin Yang}\\[0.2em]
  \textsc{Beijing Institute of Mathematical Sciences and Applications, Beijing, China}\\[0.3em]
  \textsc{Yau Mathematical Sciences Center, Tsinghua University, Beijing, China}\\[0.3em]
  \textit{E-mail address}: \texttt{fulinyang@bimsa.cn}\\[1.5em]

  \noindent
  \textbf{Zhipeng Yang}\\[0.2em]
  \textsc{Department of Mathematics, Yunnan Normal University, Kunming, China}\\
  \textsc{Yunnan Key Laboratory of Modern Analytical Mathematics and Applications, Yunnan Normal University, Kunming, China}\\[0.3em]
  \textit{E-mail address}: \texttt{yangzhipeng326@163.com}%
}

\begin{document}

\maketitle

\begin{abstract}
We establish sharp higher-order heat estimates with complete bound on the noncommutative tori \(\Tq\) 
and show the optimality in the small-time order. 
As an application in polynomial semilinear heat equations on \(\Tq\),  we give local well-posedness, the
blow-up alternative, persistence of higher regularity, and instantaneous
smoothing in the Sobolev algebra scale \(H^k_\theta\), \(k>n/2\).
\end{abstract}

\textbf{Keywords:} Noncommutative torus; Fourier multipliers; semilinear heat equation.

\textbf{MSC2020:} 46L52, 42B15, 35K58.

\section{Introduction}

Heat semigroups are one of the basic tools in elliptic and parabolic analysis. In the classical setting, estimates for spatial and time derivatives of heat semigroups lead to regularization effects, Sobolev estimates, square-function identities, and well-posedness results for nonlinear evolution equations. On Riemannian manifolds, such estimates are closely connected with \(\Gamma\)-calculus, functional inequalities, stochastic derivative formulas, and heat-kernel asymptotics. We refer to the monograph of Bakry, Gentil, and Ledoux \cite{BGL14} for a systematic account. Gradient estimates for harmonic functions via heat semigroups were obtained by Thalmaier and Wang \cite{TW98}, and Bismut-type formulas for Hessians of heat semigroups were studied by Arnaudon, Plank, and Thalmaier \cite{APT03}.

The purpose of this paper is to develop a higher-order heat-semigroup framework on the noncommutative torus and to apply it to semilinear heat equations. The noncommutative torus is a central model space in noncommutative geometry. It was developed as a basic example of a noncommutative differentiable manifold by Connes \cite{Connes94} and Rieffel \cite{Rieffel90}. In this setting, the smooth algebra \(\A\) carries canonical derivations
\[
\delta_1,\ldots,\delta_n,
\]
which play the role of coordinate vector fields, and the associated nonnegative Laplacian is
\[
L=-\sum_{j=1}^{n}\delta_j^2.
\]
The heat semigroup \(P_t=e^{-tL}\) is diagonal with respect to the Fourier basis \(\{U^m:m\in\Z^n\}\). Therefore one may expect classical heat estimates on \(\mathbb T^n\) to have counterparts on \(\mathbb T_\theta^n\). For \(L^p\)-spaces over quantum tori, the natural formulation of these estimates uses completely bounded Fourier multipliers.

Harmonic analysis on quantum tori was developed in depth by Chen, Xu, and Yin \cite{ChenXuYin}. A key result for the present paper is their transference theorem, which identifies completely bounded \(L^p\) Fourier multipliers on quantum tori with the corresponding completely bounded multipliers on the classical torus for \(1<p<\infty\). Sobolev, Besov, and Triebel--Lizorkin spaces on quantum tori were later studied systematically by Xiong, Xu, and Yin \cite{XiongXuYin}; in particular, they established semigroup characterizations of these function spaces and multiplier results independent of the deformation parameter. Quantum differentiability on quantum tori was investigated by McDonald, Sukochev, and Xiong \cite{McDonaldSukochevXiong}. From the viewpoint of noncommutative diffusion geometry, Wirth and Zhang \cite{WirthZhang} obtained complete gradient estimates for quantum Markov semigroups, while Arhancet \cite{Arhancet} studied heat-kernel estimates and spectral dimension in a broader noncommutative setting.

Nonlinear partial differential equations in noncommutative settings have also attracted attention. Rosenberg \cite{Rosenberg08} studied noncommutative analogues of Laplace's equation and some nonlinear elliptic equations on noncommutative tori. More recently, McDonald \cite{McDonald24} developed paradifferential methods for nonlinear partial differential equations on noncommutative Euclidean spaces and applied them to nonlinear evolution equations. Recent works further show that Fourier multiplier methods on quantum tori are closely related to nonlinear equations. Sukochev, Tulenov, and Zanin \cite{STZ25} proved complete boundedness of Sobolev projections on quantum tori and gave applications. Shaimardan, Tastankul, and Tulenov \cite{STT26} obtained \(L^p\)-\(L^q\) Fourier multiplier estimates on noncommutative tori with applications to nonlinear equations. Ruzhansky, Shaimardan, and Tulenov \cite{RST26} proved H\"ormander-type multiplier theorems and Nikolskii inequalities on quantum tori, with embedding and heat-type consequences. Related \(L^p\)-\(L^q\) multiplier estimates on more general noncommutative spaces were obtained in \cite{RT25}. In another direction, Fathizadeh and Khalkhali \cite{FK12,FK13} studied Laplace-type operators and scalar curvature on noncommutative two-tori.

Motivated by these developments, we study pointwise-in-time estimates for the mixed heat operators
\[
\delta^\alpha L^\ell P_t,
\qquad \alpha\in\Nzero^n,
\qquad \ell\in\Nzero.
\]
Our first aim is to prove the optimal small-time estimate
\[
\|\delta^\alpha L^\ell P_t\|_{cb(L^p(\Tq))}
\lesssim t^{-\ell-\frac{|\alpha|}{2}},
\qquad 1<p<\infty,
\]
and to record its consequences in Sobolev regularization and Hessian-type estimates (see definition \ref{Def2.1} for the complete bound). Our second aim is to use this heat-semigroup framework to study semilinear heat equations on \(\Tq\). Since the product on \(\Tq\) is noncommutative, the nonlinear estimates required for polynomial nonlinearities are not merely pointwise estimates. We therefore prove, within the paper, a Sobolev algebra theorem and polynomial nonlinear estimates in the Hilbert scale \(H^k_\theta\), \(k>n/2\), using the twisted Fourier convolution formula.

We state the main results. For a multi-index \(\alpha=(\alpha_1,\ldots,\alpha_n)\in\Nzero^n\), set
\[
|\alpha|=\alpha_1+\cdots+\alpha_n,
\qquad
\delta^\alpha=\delta_1^{\alpha_1}\cdots\delta_n^{\alpha_n},
\qquad
m^\alpha=m_1^{\alpha_1}\cdots m_n^{\alpha_n}.
\]

\begin{theorem}\label{Thm1.1}
Let \(\alpha\in\Nzero^{n}\), let \(\ell\in\Nzero\), and let \(1<p<\infty\). 
\begin{enumerate}[\rm(i)]
\item For every \(t>0\), the operator \(\delta^{\alpha}L^{\ell}P_{t}\), defined on \(\A\), extends uniquely to a completely bounded operator on \(L^{p}(\Tq)\). Moreover, there exists a constant \(C_{\alpha,\ell,n}>0\), independent of \(\theta\) and \(p\), such that
\[
\norm{\delta^{\alpha}L^{\ell}P_{t}}_{cb(L^{p}(\Tq))}
\le
\frac{C_{\alpha,\ell,n}}{t^{\ell+\abs{\alpha}/2}}
\qquad \forall\; t>0.
\]
In particular, for every \(a\in\A\),
\[
\norm{\delta^{\alpha}\partial_{t}^{\ell}P_{t}(a)}_{L^{p}(\Tq)}
\le
\frac{C_{\alpha,\ell,n}}{t^{\ell+\abs{\alpha}/2}}
\norm{a}_{L^{p}(\Tq)}
\qquad \forall\; t>0.
\]
\item If \((\alpha,\ell)\neq(0,0)\), then there exists a constant \(c_{\alpha,\ell,n}>0\) such that, for every \(1<p<\infty\) and all sufficiently small \(t>0\),
\[
\sup_{\norm{a}_{L^{p}(\Tq)}=1}
\norm{\delta^{\alpha}L^{\ell}P_{t}(a)}_{L^{p}(\Tq)}
\ge
\frac{c_{\alpha,\ell,n}}{t^{\ell+\abs{\alpha}/2}}.
\]
Hence the factor \(t^{-\ell-\abs{\alpha}/2}\) in {\rm (i)} cannot be improved as \(t\downarrow0\).
\end{enumerate}
\end{theorem}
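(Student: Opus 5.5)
The plan is to reduce (i) to a classical statement on \(\Tn\) via the transference theorem of Chen, Xu, and Yin \cite{ChenXuYin}, and then to prove the classical statement by a kernel estimate. On the Fourier basis, \(\delta_j U^m = 2\pi i m_j U^m\) (up to the normalization of the derivations), so \(\delta^\alpha L^\ell P_t\) is the Fourier multiplier with symbol
\[
\phi_t(m)=(2\pi i)^{|\alpha|} m^\alpha\,(4\pi^2|m|^2)^\ell e^{-4\pi^2 t|m|^2},\qquad m\in\Z^n .
\]
By transference, its cb norm on \(L^p(\Tq)\) equals the cb norm of the same multiplier on \(L^p(\Tn)\), and this holds independently of \(\theta\). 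For a convolution operator on a commutative space, the cb norm on \(L^p\) is at most the \(L^1\) norm of the kernel. Indeed, convolution with \(K\in L^1(\Tn)\) tensorized with \(\mathrm{id}_{S_p}\) is an average of translations, and translations are complete isometries. The resulting bound is therefore uniform in \(p\).

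The kernel is \(K_t=\delta^\alpha L^\ell h_t\), where \(h_t\) is the heat kernel on \(\Tn\). By periodization,
\[
K_t(x)=\sum_{k\in\Z^n}(\partial^\alpha(-\Delta)^\ell g_t)(x+k),
\]
with \(g_t\) the Euclidean Gaussian of variance \(\sim t\). This gives
\[
\|K_t\|_{L^1(\Tn)}\le \|\partial^\alpha\Delta^\ell g_t\|_{L^1(\R^n)}=t^{-\ell-|\alpha|/2}\|\partial^\alpha\Delta^\ell g_1\|_{L^1(\R^n)}
\]
by scaling. Hence the bound holds for all \(t>0\), not just small \(t\). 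Density of \(\A\) in \(L^p\) then gives the unique extension. The statement about \(\partial_t^\ell\) follows because \(\partial_t^\ell P_t=(-L)^\ell P_t\) on \(\A\): the series converges absolutely thanks to the rapid decay of the coefficients.

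For (ii), the plan is to test on a single Fourier mode. Since \(U^m\) is unitary, \(\|U^m\|_{L^p}=1\) for every \(p\), and
\[
\|\delta^\alpha L^\ell P_t U^m\|_p=|\phi_t(m)|.
\]
It therefore suffices to choose \(m=m(t)\) with \(|\phi_t(m)|\gtrsim t^{-\ell-|\alpha|/2}\). Take \(m=N(1,\dots,1)\) with \(N=\lfloor t^{-1/2}\rfloor\ge1\). Then \(|m^\alpha|=N^{|\alpha|}\), \(|m|^2=nN^2\), and \(tN^2\in[1/4,1]\) for small \(t\). This gives
\[
|\phi_t(m)|\ge c\,N^{|\alpha|+2\ell}e^{-4\pi^2 n}\ge c'\,t^{-\ell-|\alpha|/2},
\]
with \(c'\) independent of \(p\). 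We need \((\alpha,\ell)\ne(0,0)\) only to make the statement meaningful; the estimate itself holds for all \(t\le 1\).

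The main obstacle is the first step: applying the transference theorem precisely, and justifying that the cb norm of a classical convolution operator is bounded by \(\|K\|_{L^1}\) uniformly in \(p\). If the cited transference is stated only for multipliers on \(\Z^n\) in a specific normalization, I will verify that \(\phi_t\) fits it. Alternatively, I can bypass transference entirely: write \(\delta^\alpha L^\ell P_t(a)=\int_{\Tn}K_t(s)\,\sigma_s(a)\,ds\), where \(\sigma_s\) is the trace-preserving automorphic \(\Tn\)-action \(U^m\mapsto e^{2\pi i m\cdot s}U^m\). Each \(\sigma_s\) is a complete isometry on \(L^p(\Tq)\), so the cb bound \(\|K_t\|_{L^1}\) follows directly and is valid even for \(p=1,\infty\).
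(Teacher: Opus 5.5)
Your proposal is correct and follows the paper's route: transference via Chen--Xu--Yin, an $L^1$ bound on the periodized Gaussian kernel obtained by scaling and fed into the vector-valued Young inequality, and for (ii) a test on a single unitary $U^{m}$ with $m$ on the diagonal at frequency of order $t^{-1/2}$. The only difference is cosmetic. You bound the kernel $\partial^{\alpha}\Delta^{\ell}G_t$ of the whole operator directly, whereas the paper expands $L^{\ell}$ multinomially into the operators $\delta^{\alpha+2\beta}P_t$ with $|\beta|=\ell$ and applies its bound for $\delta^{\gamma}P_t$ to each term.
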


The Theorem~\ref{Thm1.1} implies Sobolev regularization. For every \(k,r\in\Nzero\) and \(1<p<\infty\),
\[
P_t:W^{k,p}_\theta\longrightarrow W^{k+r,p}_\theta
\]
and
\[
\|P_t a\|_{W^{k+r,p}_\theta}
\le
C_{k,r,n}\bigl(1+t^{-r/2}\bigr)\|a\|_{W^{k,p}_\theta}.
\]
Thus \(P_t\) is instantly smoothing in every Sobolev scale generated by the canonical derivations.

The nonlinear estimates behind the semilinear theory are as follows. If \(k>n/2\), then
\[
\|ab\|_{H^k_\theta}
\le
C_{k,n}\|a\|_{H^k_\theta}\|b\|_{H^k_\theta},
\qquad a,b\in H^k_\theta.
\]
Consequently, every noncommutative polynomial of the form
\[
\mathcal P(u)=
\sum_{\nu=0}^{q}\sum_{\mu=1}^{N_\nu}
 b_{\nu,\mu,0}u b_{\nu,\mu,1}u\cdots u b_{\nu,\mu,\nu},
\qquad b_{\nu,\mu,j}\in H^k_\theta,
\]
defines a locally Lipschitz continuous map $\mathcal P:H^k_\theta\to H^k_\theta$, i.e., for every \(R>0\), there exists \(C_R>0\) such that
\[
\|\mathcal P(u)-\mathcal P(v)\|_{H^k_\theta}
\le C_R\|u-v\|_{H^k_\theta}
\]
whenever \(\|u\|_{H^k_\theta},\|v\|_{H^k_\theta}\le R\).

\begin{theorem}\label{Thm1.2}
Let \(k\in\N\) satisfy \(k>n/2\), and assume that \(\mathcal N:H^{k}_{\theta}\to H^{k}_{\theta}\) is locally Lipschitz.
\begin{enumerate}[\rm(i)]
\item For every \(u_0\in H^{k}_{\theta}\), there exist \(T>0\) and a unique mild solution
$u\in C([0,T];H^{k}_{\theta})$
of
\[
\partial_t u+Lu=\mathcal N(u),
\qquad u(0)=u_0.
\]
\item The solution has a maximal existence time \(T_{\max}\in(0,\infty]\). If \(T_{\max}<\infty\), then
\[
\lim_{t\to T_{\max}}\|u(t)\|_{H^k_\theta}=\infty.
\]
\item The solution depends locally Lipschitz continuously on the initial datum, i.e., if two solutions \(u\) and \(v\) are defined on \([0,T]\) and stay in a fixed ball of \(H^k_\theta\), then
\[
\sup_{0\le t\le T}\|u(t)-v(t)\|_{H^k_\theta}
\le C_T\|u(0)-v(0)\|_{H^k_\theta}.
\]
\item In addition, for every \(0<T<T_{\max}\), one has
$u\in C((0,T];H^{k+1}_\theta)$, 
and, for every \(0<t\le T\),
\begin{equation}\label{eq1.1}
\|u(t)\|_{H^{k+1}_\theta}
\le
C_{k,n}(1+t^{-1/2})\|u_0\|_{H^k_\theta}
+C_{k,n}M_T(t+t^{1/2}),
\end{equation}
where
\[
M_T=\sup_{0\le s\le T}\|\mathcal N(u(s))\|_{H^k_\theta}.
\]
\item If \(r\in\N\), \(u_0\in H^{k+r}_\theta\), and \(\mathcal N\) is locally Lipschitz from \(H^{k+r}_\theta\) into itself, then
\[
u\in C([0,T];H^{k+r}_\theta)
\]
for every \(0<T<T_{\max}\).
\end{enumerate}
\end{theorem}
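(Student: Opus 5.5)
We treat the equation through its Duhamel formulation
\[
u(t)=P_tu_0+\int_0^tP_{t-s}\mathcal N(u(s))\,ds
\]
in the Hilbert space \(X=H^k_\theta\), and run the standard contraction argument. Two linear facts about \(P_t\) on the Sobolev scale are used throughout. First, \(P_t\) is a contraction on \(H^k_\theta\) and is strongly continuous; this is immediate because it acts diagonally on the Fourier basis by \(e^{-t|m|^2}\le1\), and dominated convergence on Fourier coefficients gives continuity. Second, there is the regularization bound \(\|P_ta\|_{H^{k+1}_\theta}\le C_{k,n}(1+t^{-1/2})\|a\|_{H^k_\theta}\). This is the case \(p=2\), \(r=1\) of the consequence of Theorem~\ref{Thm1.1} recorded above. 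It can also be read off directly from \(\sup_{x\ge0}x^{1/2}e^{-tx^2}\lesssim t^{-1/2}\).

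For (i), fix \(u_0\) and let \(R=2\|u_0\|_{H^k_\theta}+1\). Let \(L_R\) be the Lipschitz constant of \(\mathcal N\) on the ball \(B_R\), and put \(K_R=\|\mathcal N(0)\|+RL_R\). Define \(\Phi(u)\) as the right-hand side of the Duhamel formula, acting on \(E_T=\{u\in C([0,T];H^k_\theta):\sup_t\|u(t)\|\le R\}\). Strong continuity of \(P_t\) and continuity of \(s\mapsto\mathcal N(u(s))\) show that \(\Phi(u)\) is continuous in time. Contractivity of \(P_t\) then gives
\[
\sup_t\|\Phi u\|\le\|u_0\|+TK_R
\]
and
\[
\sup_t\|\Phi u-\Phi v\|\le TL_R\sup_t\|u-v\|.
\]
Choosing \(T\) small makes \(\Phi\) a contraction of \(E_T\). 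Uniqueness in the whole of \(C([0,T];H^k_\theta)\), not just in \(E_T\), follows from a Gronwall argument on the set where both solutions stay in a common ball.

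For (ii), concatenate local solutions by the semigroup property and let \(T_{\max}\) be the supremum of the existence times. The existence time \(T\) above depends only on an upper bound for \(\|u_0\|\). So if \(\liminf_{t\to T_{\max}}\|u(t)\|<\infty\), we can restart from times \(t_j\to T_{\max}\) with uniformly bounded data and extend past \(T_{\max}\), a contradiction. This yields a genuine limit equal to \(\infty\).

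For (iii), subtract the two Duhamel formulas to get
\[
\|u(t)-v(t)\|\le\|u_0-v_0\|+L_R\int_0^t\|u-v\|\,ds.
\]
Gronwall's inequality then gives \(C_T=e^{L_RT}\).

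For (iv), apply \(\|\cdot\|_{H^{k+1}_\theta}\) to the Duhamel formula and use the regularization bound in each term. The first term gives \(C(1+t^{-1/2})\|u_0\|\). The integral term gives
\[
CM_T\int_0^t\bigl(1+(t-s)^{-1/2}\bigr)\,ds=CM_T(t+2t^{1/2}),
\]
which is (eq1.1). The quantity \(M_T\) is finite by continuity of \(u\) on the compact interval \([0,T]\) with \(T<T_{\max}\). Continuity into \(H^{k+1}_\theta\) on \((0,T]\) requires more than this bound. Write the solution at \(t>\varepsilon\) as
\[
u(t)=P_{t-\varepsilon}u(\varepsilon)+\int_\varepsilon^t\cdots,
\]
and note that \(P_{t-\varepsilon}u(\varepsilon)\) is continuous in \(H^{k+1}_\theta\) for \(t>\varepsilon\). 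For the integral, show continuity of \(t\mapsto\int_0^tP_{t-s}f(s)\,ds\) in \(H^{k+1}_\theta\) when \(f\in C([0,T];H^k_\theta)\). To do this, split off the piece over \([t-h,t]\), which is \(O(h^{1/2})\) by the integrable singularity; the remainder involves only \(P_\tau\) with \(\tau\ge h\), which is smoothing and continuous in operator norm. This continuity is the main technical point of the whole proof, and the splitting plus the integrability of \((t-s)^{-1/2}\) is what I would try first.

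For (v), run (i) in \(H^{k+r}_\theta\) to obtain a solution on some interval. By uniqueness in \(H^k_\theta\) it coincides with \(u\). The remaining issue is that the \(H^{k+r}\) existence time might be shorter than \(T_{\max}\), so we must rule out blow-up of the \(H^{k+r}\) norm before \(T_{\max}\). Here we need an assumption that \(\mathcal N\) is controlled linearly in the top norm by the lower norm, as happens for the polynomials \(\mathcal P\) through tame product estimates. Failing that, we bootstrap one derivative at a time. By (iv), \(u(t)\in H^{k+1}_\theta\) for \(t>0\) with bounds on compact subintervals of \((0,T_{\max})\). Repeating the Duhamel estimate with the \(H^{k+j}\)-Lipschitz property and Gronwall shows the \(H^{k+r}\) norm stays finite on \([0,T]\). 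I expect this globalization step in (v) to be the main obstacle, since local Lipschitz continuity on \(H^{k+r}_\theta\) alone gives only local control, and a careful Gronwall-type argument in the \(H^{k+r}\) norm is needed.
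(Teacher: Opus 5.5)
Your (i)--(iv) follow the paper's route: a contraction in $C([0,T];H^k_\theta)$ using only contractivity and strong continuity of $P_t$, Gronwall for (iii), Duhamel plus Corollary~\ref{Cor3.7} at $p=2$ for \eqref{eq1.1}, and splitting the Duhamel integral near $s=t$ for continuity in $H^{k+1}_\theta$. Your $\liminf$ argument in (ii) is sharper than the paper's. The paper only excludes $\sup_{t<T_{\max}}\norm{u(t)}_{H^k_\theta}<\infty$, which gives $\limsup=\infty$ rather than $\lim=\infty$. (Minor slip: the elementary smoothing bound is $\sup_{y\ge0}(1+y)e^{-2ty}\le1+(2et)^{-1}$ with $y=4\pi^2\abs{m}^2$. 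By contrast, $\sup_x x^{1/2}e^{-tx^2}$ is only of order $t^{-1/4}$.)

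The gap is in (v). You flagged it but did not close it, and the paper does not close it either. Your bootstrap invokes Lipschitz bounds on $H^{k+j}_\theta$ for $1\le j<r$, which are not hypotheses of the theorem. Gronwall in $H^{k+r}_\theta$ also cannot work with merely local Lipschitz constants, because those constants depend on the $H^{k+r}$ radius you are trying to control. The paper's argument is to rerun the fixed point in $X=H^{k+r}_\theta$ and identify the two solutions via uniqueness in $H^k_\theta$. That only gives $H^{k+r}$ regularity on an interval whose length is set by $\norm{u_0}_{H^{k+r}_\theta}$, not up to every $T<T_{\max}$.

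What closes the argument is the linear smoothing estimate, not Gronwall. For $r=1$, contractivity of $P_t$ on $H^{k+1}_\theta$ and the computation in (iv) give
\[
\norm{u(t)}_{H^{k+1}_\theta}\le\norm{u_0}_{H^{k+1}_\theta}+C_{k,n}M_T\bigl(t+2t^{1/2}\bigr),\qquad 0\le t\le T<T_{\max}.
\]
Hence the $H^{k+1}_\theta$ solution, which coincides with $u$, cannot blow up before $T$, and the stated hypotheses suffice. For $r\ge2$ your instinct that extra structure is needed is right. It suffices that $\mathcal N$ map bounded sets of $H^{k+j}_\theta$ to bounded sets for $1\le j<r$. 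This holds for the polynomials of Proposition~\ref{Prop4.3} with coefficients in $H^{k+r}_\theta$, since each $H^{k+j}_\theta$ is an algebra. Then the same display one level up gives $\sup_{[0,T]}\norm{u}_{H^{k+j+1}_\theta}<\infty$, and you induct. For $r=2$ alone this hypothesis can be dropped: $u$, hence $\mathcal N(u)$, is H\"older in time with values in $H^k_\theta$, and H\"older maximal regularity bounds $\norm{Lu(t)}_{H^k_\theta}$.

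For $r\ge3$, (v) as stated is false. Take $\langle m\rangle$ as in the proof of Proposition~\ref{Prop4.2}, and set $m_j=(n_j,0,\dots,0)$ with $n_1\ge2$, $n_{j+1}=n_j^4$, and $\lambda_j=4\pi^2n_j^2$. Let $\phi$ be smooth and nondecreasing, equal to $0$ on $(-\infty,1/2]$ and to $1$ on $[3/4,\infty)$. Put $b_1=1$, define $b_{j+1}$ by $b_{j+1}\lambda_{j+1}\langle m_{j+1}\rangle^k=b_j\langle m_j\rangle^k$, set $a_j=\lambda_{j+1}b_{j+1}$, and
\[
\mathcal N(u)=\lambda_1b_1U^{m_1}+\sum_{j\ge1}a_j\,\phi\bigl(\mathrm{Re}\,\widehat u(m_j)/b_j\bigr)U^{m_{j+1}}.
\]
The recursion for $b_j$ makes $\mathcal N$ globally Lipschitz on $H^k_\theta$, so $T_{\max}=\infty$. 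Since $b_j\langle m_j\rangle^{k+r}\to\infty$ for $r\ge3$, only finitely many $\phi$-terms are nonzero on any ball of $H^{k+r}_\theta$, so $\mathcal N$ is locally Lipschitz on $H^{k+r}_\theta$. Yet starting from $u_0=0$, the coefficients $\widehat{u(t)}(m_j)$ are real and nondecreasing in $t$, and they are at least $3b_j/4$ once $t\ge\sum_{i\le j}\ln4/\lambda_i$. Hence $u(t)\notin H^{k+3}_\theta$ for $t\ge\sum_i\ln4/\lambda_i$.
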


As a direct consequence, polynomial semilinear heat equations are locally well posed in the algebra scale \(H^k_\theta\), \(k>n/2\) (where "locally" corresponds to "local time"). If the polynomial coefficients belong to \(H^{k+r}_\theta\) for all \(r\in\N\), then the corresponding solution is instantly smooth:
\[
u\in C((0,T];H^{k+r}_\theta)
\qquad \forall r\in\N,
\qquad 0<T<T_{\max}.
\]

The paper is organized as follows. Section~\ref{sec:preliminaries} recalls the noncommutative torus, the canonical derivations, Sobolev norms, and the heat semigroup. Section~\ref{sec:main-estimates} proves the Fourier identities, the completely bounded heat estimates, Theorem~\ref{Thm1.1}, and the Sobolev regularization consequences. Section~\ref{sec:sobolev-algebra} proves the Sobolev algebra property and polynomial nonlinear estimates in \(H^k_\theta\). Section~\ref{sec:semilinear-heat} proves Theorem~\ref{Thm1.2} and the polynomial consequences. 

\section{Preliminaries}\label{sec:preliminaries}

We use standard notation for noncommutative tori; see, for example, \cite{Rieffel90,ChenXuYin,XiongXuYin,McDonaldSukochevXiong}. Let \(n\ge 1\), and let $\theta=(\theta_{jk})_{1\le j,k\le n}$ be a real skew-symmetric \(n\times n\) matrix. Given a Hilbert space $\mathcal{H}$, let $U_{1},\dots,U_{n}$ be the unitary operators on $\mathcal{H}$ satisfying
\[
U_{k}U_{j}=e^{2\pi i\theta_{kj}}U_{j}U_{k},
\qquad 1\le j,k\le n.
\]
The noncommutative torus $\mathbb{T}_{\theta}^{n}$ is the von Neumann algebra generated by these unitaries, also denoted by $L^{\infty}(\mathbb{T}_{\theta}^{n})$. 

Denote by \(\mathcal{A}_{\theta}\) the \(C^{*}\)-algebra generated by $U_{1},\dots,U_{n}$.
For \(m=(m_{1},\dots,m_{n})\in\Z^{n}\), we write
\[
U^{m}=U_{1}^{m_{1}}\cdots U_{n}^{m_{n}}.
\]
The smooth algebra \(\A\) consists of all series
\[
a=\sum_{m\in\Z^{n}} c_{m} U^{m}
\]
with rapidly decreasing Fourier coefficients \((c_{m})_{m\in\Z^{n}}\). The canonical trace is given on \(\A\) by
\[\trc(a)=c_{(0,\ldots,0)}.\]
For \(1\le p<\infty\), the noncommutative \(L^{p}\)-space \(L^{p}(\Tq)\) is the completion of \(\A\) with respect to the norm
\[
\|a\|_{L^{p}(\Tq)}=\bigl(\trc(|a|^{p})\bigr)^{\frac{1}{p}}.
\]
For \(p=\infty\), we put \(L^{\infty}(\mathbb T_{\theta}^{n})=\Tq\).

\begin{definition}\label{Def2.1}
Let \(1\le p<\infty\). For \(r\in\mathbb N\), we identify
\[
M_{r}(L^{p}(\Tq))
=
L^{p}\bigl(L^{\infty}(\mathbb T_{\theta}^{n})\bar\otimes M_{r},
\tau\otimes \operatorname{tr}_{r}\bigr),
\]
where \(\operatorname{tr}_{r}\) is the usual trace on \(M_{r}\). If
\(T:L^{p}(\Tq)\to L^{p}(\Tq)\) is linear, we define
\[
\|T\|_{cb(L^{p}(\Tq))}
=
\sup_{r\ge 1}
\|\operatorname{id}_{M_{r}}\otimes T\|_{M_{r}(L^{p}(\Tq))\to M_{r}(L^{p}(\Tq))}.
\]
We say that \(T\) is completely bounded if this quantity is finite.
\end{definition}

\begin{remark}
The same convention is used for the classical torus. Thus the completely bounded norm of a Fourier multiplier on \(L^{p}(\Tn)\) is computed after amplification to
\(L^{p}(L^{\infty}(\Tn)\bar\otimes M_{r})\), equivalently to \(L^{p}(\Tn;S_{r}^{p})\) with the Schatten \(p\)-norm in the matrix variable. More details, we refer to \cite{P1988}.
\end{remark}

For \(a\in\A\), the Fourier coefficients are defined by
\[
\widehat a(m)=\trc\bigl((U^{m})^{*}a\bigr),
\qquad m\in\Z^{n}.
\]
Clearly, the family \(\{U^{m}:m\in\Z^{n}\}\) is an orthonormal basis of \(L^{2}(\Tq)\) and every \(a\in L^{2}(\Tq)\) admits an \(L^{2}\)-Fourier expansion
\[
a=\sum_{m\in\Z^{n}}\widehat a(m)U^{m}.
\]
Moreover, Plancherel's identity holds
\[
\norm{a}_{L^{2}(\Tq)}^{2}=\sum_{m\in\Z^{n}}\abs{\widehat a(m)}^{2}.
\]

For each \(1\le j\le n\), the canonical derivation \(\delta_{j}\) on $\Tq$ is defined by
\[
\delta_{j}(U^{m})=2\pi i\,m_{j}U^{m},
\qquad m\in\Z^{n}.
\]
If \(\alpha=(\alpha_{1},\dots,\alpha_{n})\in\Nzero^{n}\), set
\[
\abs{\alpha}=\alpha_{1}+\cdots+\alpha_{n},
\qquad
\delta^{\alpha}=\delta_{1}^{\alpha_{1}}\cdots\delta_{n}^{\alpha_{n}},
\qquad
m^{\alpha}=m_{1}^{\alpha_{1}}\cdots m_{n}^{\alpha_{n}}.
\]
Since the canonical derivations commute on the Fourier basis, \(\delta^{\alpha}\) is well defined on \(\A\).
For $a\in\A$ of the form $\sum_{m\in\Z^{n}}c_{m}U^{m}$,
one has
\[
\delta^{\alpha}(a)=(2\pi i)^{|\alpha|}\sum_{m\in\Z^{n}}m^{\alpha}c_{m}U^{m}.
\]

The  Laplacian is defined by
\[
L=-\sum_{j=1}^{n}\delta_{j}^{2}.
\]
On the Fourier basis,
\[
L(U^{m})=4\pi^{2}\abs{m}^{2}U^{m},
\qquad\mbox{where}\quad
\abs{m}^{2}=m_{1}^{2}+\cdots+m_{n}^{2}.
\]
For \(k\in\Nzero\), let
\[
\Ical_{k}=\{\alpha\in\Nzero^{n}: \abs{\alpha}=k\}
\quad\mbox{and}\quad
N_{k,n}=\#\Ical_{k}=\binom{n+k-1}{k}.
\]

\begin{definition}
For \(a\in\A\), define the \(k\)-th order differential family and the  Hessian by
\[
\nabla_{\theta}^{k}(a)=\bigl(\delta^{\alpha}a\bigr)_{\alpha\in\Ical_{k}},
\qquad
\Hesst(a)=\bigl(\delta_{i}\delta_{j}a\bigr)_{1\le i,j\le n}.
\]
\end{definition}

\begin{definition}
Let \(1\le p\le\infty\). For a family \(Y=(y_{\alpha})_{\alpha\in\Ical_{k}}\) with entries in \(L^{p}(\Tq)\), we define the entrywise \(\ell^{2}\)-norm by
\[
\norm{Y}_{\ell^{2}(\Ical_{k};L^{p}(\Tq))}
=
\left(\sum_{\alpha\in\Ical_{k}}\norm{y_{\alpha}}_{L^{p}(\Tq)}^{2}\right)^{1/2}.
\]
For a matrix \(X=(x_{ij})_{1\le i,j\le n}\) with entries in \(L^{p}(\Tq)\), set
\[
\norm{X}_{\ell^{2}_{n}(L^{p}(\Tq))}
=
\left(\sum_{i,j=1}^{n}\norm{x_{ij}}_{L^{p}(\Tq)}^{2}\right)^{1/2}.
\]
\end{definition}

\begin{definition}
Let \(1\le p<\infty\) and \(k\in\Nzero\). For \(a\in\A\), define the Sobolev seminorm and norm
\[
\abs{a}_{W^{k,p}_{\theta}}=\norm{\nabla_{\theta}^{k}(a)}_{\ell^{2}(\Ical_{k};L^{p}(\Tq))},
\qquad
\norm{a}_{W^{k,p}_{\theta}}=\sum_{j=0}^{k}\abs{a}_{W^{j,p}_{\theta}},
\]
where \(\Ical_{0}=\{0\}\) and \(\delta^{0}=\mathrm{Id}\). We denote by \(W^{k,p}_{\theta}\) the completion of \(\A\) with respect to \(\norm{\cdot}_{W^{k,p}_{\theta}}\).
\end{definition}

\begin{definition}\label{Def2.6}
For \(s\ge0\), define the Hilbert Sobolev space \(H^{s}_{\theta}\) as the set of all \(a\in L^{2}(\Tq)\) such that
\[
\norm{a}_{H^{s}_{\theta}}^{2}
=
\sum_{m\in\Z^{n}}\bigl(1+4\pi^{2}\abs{m}^{2}\bigr)^{s}\abs{\widehat a(m)}^{2}<\infty .
\]
\end{definition}

\begin{lemma}\label{Lem2.7}
For every integer \(k\ge0\), the norm \(\norm{\cdot}_{H^{k}_{\theta}}\) is equivalent to \(\norm{\cdot}_{W^{k,2}_{\theta}}\). In particular, \(H^{k}_{\theta}\) and \(W^{k,2}_{\theta}\) have the same elements.
\end{lemma}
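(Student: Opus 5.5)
The plan is to reduce both norms to weighted \(\ell^{2}\)-sums of Fourier coefficients via Plancherel, and then compare the weights pointwise in \(m\). First work with \(a\in\A\), so that all sums are finite and every manipulation is justified. Extend to the completions afterwards.

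For \(a\in\A\) and \(\alpha\in\Ical_{j}\), the formula for \(\delta^{\alpha}\) on the Fourier basis together with Plancherel's identity gives
\[
\norm{\delta^{\alpha}a}_{L^{2}(\Tq)}^{2}=(2\pi)^{2j}\sum_{m\in\Z^{n}}m^{2\alpha}\abs{\widehat a(m)}^{2}.
\]
Summing over \(\alpha\in\Ical_{j}\) gives
\[
\abs{a}_{W^{j,2}_{\theta}}^{2}=\sum_{m}w_{j}(m)\abs{\widehat a(m)}^{2},
\qquad
w_{j}(m)=(2\pi)^{2j}\sum_{\abs{\alpha}=j}m^{2\alpha}.
\]
The key elementary comparison is between \(w_{j}(m)\) and \((2\pi)^{2j}\abs{m}^{2j}=(2\pi)^{2j}\bigl(\sum_{i}m_{i}^{2}\bigr)^{j}\). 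Expanding by the multinomial theorem, \(\abs{m}^{2j}=\sum_{\abs{\alpha}=j}\binom{j}{\alpha}m^{2\alpha}\), where the multinomial coefficients satisfy \(1\le\binom{j}{\alpha}\le j!\). Hence
\[
w_{j}(m)\le(2\pi)^{2j}\abs{m}^{2j}\le j!\,w_{j}(m).
\]

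Next compare with the \(H^{k}_{\theta}\) weight. By the binomial theorem,
\[
(1+4\pi^{2}\abs{m}^{2})^{k}=\sum_{j=0}^{k}\binom{k}{j}(2\pi)^{2j}\abs{m}^{2j}.
\]
This is comparable, with constants depending only on \(k\), to \(\sum_{j=0}^{k}w_{j}(m)\). Therefore
\[
\norm{a}_{H^{k}_{\theta}}^{2}\simeq\sum_{j=0}^{k}\abs{a}_{W^{j,2}_{\theta}}^{2}.
\]
Finally, the \(\ell^{1}\)-sum \(\norm{a}_{W^{k,2}_{\theta}}=\sum_{j}\abs{a}_{W^{j,2}_{\theta}}\) is equivalent to the \(\ell^{2}\)-sum \(\bigl(\sum_{j}\abs{a}^{2}_{W^{j,2}_{\theta}}\bigr)^{1/2}\), with constant \(\sqrt{k+1}\) by Cauchy--Schwarz. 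This gives the norm equivalence on \(\A\), with constants depending only on \(k\) (and not on \(\theta\) or \(n\) beyond the multinomial bounds).

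For the identification of the spaces, note that \(\A\) is dense in \(H^{k}_{\theta}\): truncating the Fourier series of \(a\in H^{k}_{\theta}\) yields finite sums in \(\A\) converging in \(H^{k}_{\theta}\) by dominated convergence of the weighted sum. Since the two norms are equivalent on the common dense subspace \(\A\), the completions coincide. Here \(W^{k,2}_{\theta}\) is realized inside \(L^{2}(\Tq)\): a Cauchy sequence for \(\norm{\cdot}_{W^{k,2}_{\theta}}\) is Cauchy in \(L^{2}\), since the \(j=0\) term is the \(L^{2}\) norm.

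The only point needing care is this last step, namely that the abstract completion \(W^{k,2}_{\theta}\) embeds injectively into \(L^{2}(\Tq)\). This follows from the norm equivalence: if \(a_{N}\to0\) in \(L^{2}\) and \((a_{N})\) is Cauchy in \(W^{k,2}_{\theta}\), then it is Cauchy in \(H^{k}_{\theta}\). Its \(H^{k}_{\theta}\)-limit has vanishing Fourier coefficients, so it is \(0\), and hence the \(W^{k,2}_{\theta}\)-limit is \(0\) as well. The rest is the routine weight comparison above.
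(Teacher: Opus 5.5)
Your proof is correct and follows the same route as the paper. Plancherel reduces both norms on \(\A\) to weighted \(\ell^{2}\)-sums of Fourier coefficients, the weight \(\sum_{\abs{\alpha}\le k}(2\pi)^{2\abs{\alpha}}m^{2\alpha}\) is compared pointwise with \((1+4\pi^{2}\abs{m}^{2})^{k}\), and one then passes to completions. You make explicit what the paper leaves implicit: the multinomial and binomial constants (which in fact depend only on \(k\)), the \(\ell^{1}\)-versus-\(\ell^{2}\) comparison hidden in the paper's first \(\simeq\), and the density and injectivity argument that places the completion inside \(L^{2}(\Tq)\). The only minor imprecision is the phrase ``all sums are finite'' for \(a\in\A\): the coefficients are rapidly decreasing rather than finitely supported, so the sums are absolutely convergent rather than finite.
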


\begin{proof}
It suffices to prove the equivalence on \(\A\), and then pass to completions. For \(a\in\A\), Plancherel's identity gives
\[
\norm{a}_{W^{k,2}_{\theta}}^{2}
\simeq
\sum_{\abs{\alpha}\le k}\norm{\delta^{\alpha}a}_{L^{2}(\Tq)}^{2}
=
\sum_{m\in\Z^{n}}\left(\sum_{\abs{\alpha}\le k}(2\pi)^{2\abs{\alpha}}\abs{m^{\alpha}}^{2}\right)\abs{\widehat a(m)}^{2}.
\]
The polynomial in parentheses is comparable to \(\bigl(1+4\pi^{2}\abs{m}^{2}\bigr)^{k}\), with constants depending only on \(k\) and \(n\). This proves the equivalence.
\end{proof}

\begin{remark}
By finite-dimensional norm equivalence on each \(\Ical_{j}\), the norm \(\norm{\cdot}_{W^{k,p}_{\theta}}\) is equivalent to the more standard derivative-based Sobolev norm
\[
\sum_{0\le \abs{\alpha}\le k}\norm{\delta^{\alpha}a}_{L^{p}(\Tq)},
\]
so the completion above coincides with the usual Sobolev scale considered in \cite{XiongXuYin}.
\end{remark}

\begin{remark}
The objects \(\nabla_{\theta}^{k}\) and \(\Hesst\) depend on the distinguished  derivations \(\delta_{1},\dots,\delta_{n}\). Throughout the paper they are used as flat analogues of classical higher-order differential operators rather than as coordinate-free tensors.
\end{remark}

The heat semigroup generated by \(L\) is
\[
P_{t}=e^{-tL},
\qquad t>0.
\]
Since \(L\) is diagonal on Fourier modes,
\[
P_{t}(U^{m})=e^{-4\pi^{2}\abs{m}^{2}t}U^{m},
\qquad m\in\Z^{n}.
\]
Hence, for $\displaystyle  a=\sum_{m\in\Z^{n}}\widehat a(m)U^{m}\in\A$,
one has
\[
P_{t}(a)=\sum_{m\in\Z^{n}}e^{-4\pi^{2}\abs{m}^{2}t}\widehat a(m)U^{m}.
\]

\section{Main estimates and Sobolev regularization}\label{sec:main-estimates}

We first record the Fourier identities used below.

\begin{lemma}\label{Lem3.1}
Let \(\alpha\in\Nzero^{n}\), \(\ell\in\Nzero\), and let $\displaystyle a=\sum_{m\in\Z^{n}}\widehat a(m)U^{m}\in\A.$
Then, for every \(t>0\),
\[
\delta^{\alpha}P_{t}(a)=P_{t}(\delta^{\alpha}a)
\]
and
\[
\delta^{\alpha}P_{t}(a)
=
(2\pi i)^{\abs{\alpha}}
\sum_{m\in\Z^{n}}
m^{\alpha}e^{-4\pi^{2}\abs{m}^{2}t}\widehat a(m)U^{m}.
\]
Moreover,
\[
\delta^{\alpha}L^{\ell}P_{t}(a)
=
(2\pi i)^{\abs{\alpha}}
\sum_{m\in\Z^{n}}(4\pi^{2}\abs{m}^{2})^{\ell}m^{\alpha}
e^{-4\pi^{2}\abs{m}^{2}t}\widehat a(m)U^{m},
\]
and
\[
\partial_{t}^{\ell}P_{t}(a)=(-L)^{\ell}P_{t}(a).
\]
\end{lemma}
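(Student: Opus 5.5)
The plan is to verify each identity on the Fourier basis and then extend by linearity and convergence. Since \(a\in\A\), its coefficients \(\widehat a(m)\) decay rapidly. Moreover, every multiplier appearing here, namely \(m^{\alpha}\), \((4\pi^{2}\abs{m}^{2})^{\ell}\), and \(e^{-4\pi^{2}\abs{m}^{2}t}\), is bounded by a polynomial in \(\abs{m}\) for fixed \(t\). Hence each of the series below again has rapidly decreasing coefficients and defines an element of \(\A\). In particular, applying \(\delta^{\alpha}\), \(L^{\ell}\) or \(P_t\) to such a series may be done term by term: the operators are defined on \(\A\) by their action on the \(U^{m}\), extended coefficientwise.

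The first step is to compute everything on a single mode. For \(U^{m}\) we have \(P_t U^{m}=e^{-4\pi^{2}\abs{m}^{2}t}U^{m}\) and \(\delta^{\alpha}U^{m}=(2\pi i)^{\abs{\alpha}}m^{\alpha}U^{m}\); the latter follows by iterating \(\delta_j U^{m}=2\pi i m_j U^{m}\). We also have \(L^{\ell}U^{m}=(4\pi^{2}\abs{m}^{2})^{\ell}U^{m}\). All three are scalar multiples of \(U^{m}\), so they commute on each mode. Summing against \(\widehat a(m)\) then gives \(\delta^{\alpha}P_t a=P_t\delta^{\alpha}a\), the displayed formula for \(\delta^{\alpha}P_t a\), and the formula for \(\delta^{\alpha}L^{\ell}P_t a\), whose multiplier is the product \((2\pi i)^{\abs{\alpha}}(4\pi^{2}\abs{m}^{2})^{\ell}m^{\alpha}e^{-4\pi^{2}\abs{m}^{2}t}\).

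The second step is the time-derivative identity. For fixed \(m\), \(\partial_t^{\ell}e^{-4\pi^{2}\abs{m}^{2}t}=(-4\pi^{2}\abs{m}^{2})^{\ell}e^{-4\pi^{2}\abs{m}^{2}t}\), which is exactly the multiplier of \((-L)^{\ell}P_t\). The only point requiring care is interchanging \(\partial_t\) with the series. I would handle it by showing that the series of \(t\)-derivatives converges uniformly for \(t\) in compact subsets of \((0,\infty)\), in fact for \(t\ge0\). Convergence can be taken in the \(\A\)-topology, or simply in \(L^{2}\) or in the \(C^{*}\)-norm, since \(\norm{U^{m}}=1\).

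This uniform convergence follows from the bound \(\abs{m}^{2j}e^{-4\pi^{2}\abs{m}^{2}t}\abs{\widehat a(m)}\le \abs{m}^{2j}\abs{\widehat a(m)}\), which is summable because the coefficients decay rapidly. Term-by-term differentiation of a vector-valued series whose derivative series converges uniformly then justifies the interchange, and induction on \(\ell\) finishes the argument. This justification is the only mildly nontrivial point; the rest is algebra on the diagonal basis.
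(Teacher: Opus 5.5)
Your proposal is correct and follows the paper's proof. Both verify each identity on the single modes \(U^{m}\), where \(P_t\), \(\delta^{\alpha}\) and \(L\) act as scalars, and then sum termwise against the rapidly decreasing coefficients \(\widehat a(m)\). The only difference is that you spell out why \(\partial_t\) may be taken inside the series, via uniform convergence of the derivative series using \(\abs{m}^{2j}\abs{\widehat a(m)}\) summable; the paper leaves this implicit in the phrase ``termwise application to the rapidly convergent Fourier series''.
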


\begin{proof}
All identities follow by termwise application to the rapidly convergent Fourier series. Indeed,
\[
P_{t}(U^{m})=e^{-4\pi^{2}\abs{m}^{2}t}U^{m},
\qquad
\delta^{\alpha}U^{m}=(2\pi i)^{\abs{\alpha}}m^{\alpha}U^{m},
\]
and
\[
L(U^{m})=4\pi^{2}\abs{m}^{2}U^{m}.
\]
These formulas give the commutation relation, the two multiplier representations, and the identity for time derivatives.
\end{proof}

We next pass through the classical torus \(\Tn=\R^{n}/\Z^{n}\).

\begin{lemma}\label{Lem3.2}
Let \(\alpha\in\Nzero^{n}\), and let
\[
G_{t}(x)=(4\pi t)^{-n/2}e^{-\abs{x}^{2}/(4t)},
\qquad x\in\R^{n},\quad t>0,
\]
be the Euclidean heat kernel. Then there exists a constant \(C_{\alpha,n}>0\) such that
\[
\norm{\partial^{\alpha}G_{t}}_{L^{1}(\R^{n})}
\le
\frac{C_{\alpha,n}}{t^{\abs{\alpha}/2}},
\qquad \forall\; t>0.
\]
\end{lemma}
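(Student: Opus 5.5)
The plan is a scaling argument that reduces everything to the case \(t=1\). Write \(G_t(x)=t^{-n/2}G_1(x/\sqrt{t})\). Differentiating \(\alpha\) times by the chain rule gives
\[
\partial^{\alpha}G_t(x)=t^{-n/2-\abs{\alpha}/2}(\partial^{\alpha}G_1)(x/\sqrt{t}).
\]
Changing variables \(y=x/\sqrt{t}\), with \(dx=t^{n/2}\,dy\), then yields
\[
\norm{\partial^{\alpha}G_t}_{L^{1}(\R^{n})}=t^{-\abs{\alpha}/2}\norm{\partial^{\alpha}G_1}_{L^{1}(\R^{n})}.
\]
So it suffices to set \(C_{\alpha,n}=\norm{\partial^{\alpha}G_1}_{L^1(\R^n)}\) and show that this quantity is finite.

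For the finiteness, the plan is to show by induction on \(\abs{\alpha}\) that
\[
\partial^{\alpha}G_1(x)=Q_{\alpha}(x)\,e^{-\abs{x}^2/4},
\]
where \(Q_\alpha\) is a polynomial of degree at most \(\abs{\alpha}\) whose coefficients depend only on \(\alpha\) and \(n\). The base case is \(G_1\) itself, with \(Q_0=(4\pi)^{-n/2}\). For the inductive step, applying \(\partial_j\) produces
\[
\bigl(\partial_jQ_\alpha-\tfrac{x_j}{2}Q_\alpha\bigr)e^{-\abs{x}^2/4},
\]
and the new prefactor is again a polynomial, of degree at most \(\abs{\alpha}+1\). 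Alternatively, one can factor \(G_1\) as a product over coordinates of one-dimensional Gaussians and use Hermite polynomials.

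Once this form is established, the bound \(\abs{Q_\alpha(x)}\le C(1+\abs{x})^{\abs{\alpha}}\) together with the integrability of \((1+\abs{x})^{\abs{\alpha}}e^{-\abs{x}^2/4}\) over \(\R^n\) gives \(C_{\alpha,n}<\infty\). This constant depends only on \(\alpha\) and \(n\), as claimed.

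There is no real obstacle here. The only point needing care is tracking the exponent in the scaling identity, so that exactly \(t^{-\abs{\alpha}/2}\) remains after the Jacobian factor \(t^{n/2}\) cancels the normalization \(t^{-n/2}\).
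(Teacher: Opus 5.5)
Your proposal is correct and follows the paper's proof: the same scaling identity \(G_t(x)=t^{-n/2}G_1(x/\sqrt{t})\), the same change of variables giving \(t^{-\abs{\alpha}/2}\norm{\partial^{\alpha}G_1}_{L^1(\R^n)}\), and the same finiteness of that norm. The only difference is that where the paper simply notes that \(G_1\) is a Schwartz function, you justify integrability explicitly by writing \(\partial^{\alpha}G_1\) as a polynomial times a Gaussian.
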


\begin{proof}
The scaling identity
\[
G_{t}(x)=t^{-n/2}G_{1}(x/\sqrt{t})
\]
implies
\[
\partial^{\alpha}G_{t}(x)
=
t^{-n/2-\abs{\alpha}/2}(\partial^{\alpha}G_{1})(x/\sqrt{t}).
\]
Hence, by the change of variables \(x=\sqrt{t}\,y\),
\[
\norm{\partial^{\alpha}G_{t}}_{L^{1}(\R^{n})}
=
t^{-\abs{\alpha}/2}\int_{\R^{n}}\abs{(\partial^{\alpha}G_{1})(y)}\,dy.
\]
Since \(G_{1}\) is a Schwartz function, \(\partial^{\alpha}G_{1}\in L^{1}(\R^{n})\), and the conclusion follows.
\end{proof}

\begin{lemma}\label{Lem3.3}
Let \(\alpha\in\Nzero^{n}\), and let
\[
H_{t}(x)=\sum_{k\in\Z^{n}}G_{t}(x+k),
\qquad x\in\Tn,
\]
be the heat kernel on the classical torus \(\Tn\). Then there exists a constant \(C_{\alpha,n}>0\) such that
\[
\norm{\partial^{\alpha}H_{t}}_{L^{1}(\Tn)}
\le
\frac{C_{\alpha,n}}{t^{\abs{\alpha}/2}},
\qquad \forall\; t>0.
\]
\end{lemma}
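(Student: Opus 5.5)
The plan is to reduce to Lemma~\ref{Lem3.2} by periodization. First I would justify differentiating the periodized series termwise: for fixed \(t>0\), each \(\partial^{\alpha}G_{t}\) has the form \(t^{-n/2-\abs{\alpha}/2}Q_\alpha(x/\sqrt t)e^{-\abs{x}^{2}/(4t)}\) with \(Q_\alpha\) a polynomial. So on the fundamental domain \([0,1)^n\), or on any compact set, the series \(\sum_{k}\partial^{\alpha}G_{t}(x+k)\) converges uniformly by Gaussian decay in \(k\). Hence \(H_t\) is smooth on \(\Tn\) and
\[
\partial^{\alpha}H_{t}(x)=\sum_{k\in\Z^{n}}(\partial^{\alpha}G_{t})(x+k).
\]

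Next I would take absolute values and integrate over \([0,1)^n\). By the triangle inequality and Tonelli's theorem,
\[
\norm{\partial^{\alpha}H_{t}}_{L^{1}(\Tn)}
\le\int_{[0,1)^{n}}\sum_{k\in\Z^{n}}\abs{\partial^{\alpha}G_{t}(x+k)}\,dx
=\sum_{k\in\Z^{n}}\int_{[0,1)^{n}+k}\abs{\partial^{\alpha}G_{t}(y)}\,dy
=\norm{\partial^{\alpha}G_{t}}_{L^{1}(\R^{n})}.
\]
The last equality holds because the translates \([0,1)^n+k\) tile \(\R^n\). Lemma~\ref{Lem3.2} then gives the bound \(C_{\alpha,n}t^{-\abs{\alpha}/2}\) for all \(t>0\), with the same constant.

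The only point needing care is the termwise differentiation, and it is routine. One could even bypass it: the distributional derivative of the periodization is the periodization of the derivative, because pairing with a periodic test function unfolds into an integral over \(\R^n\). Since the \(L^1\) argument above already shows the periodized derivative lies in \(L^1(\Tn)\), this identification is legitimate. I would also note, for later use, that \(H_t\) has Fourier coefficients \(e^{-4\pi^{2}\abs{m}^{2}t}\) by Poisson summation. That fact is not needed for the present bound.
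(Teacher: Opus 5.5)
Your proposal is correct and follows essentially the same route as the paper: termwise differentiation of the periodized Gaussian, then the triangle inequality and unfolding of the integral over the fundamental domain to $\norm{\partial^{\alpha}G_{t}}_{L^{1}(\R^{n})}$, and finally Lemma~\ref{Lem3.2}. Your justification of the termwise differentiation via uniform convergence on compacts is slightly more explicit than the paper's, but the argument is the same.
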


\begin{proof}
Since derivatives of the Gaussian are rapidly decreasing, the periodized series defining \(\partial^{\alpha}H_{t}\) converges absolutely in \(L^{1}(\Tn)\), and
\[
\partial^{\alpha}H_{t}(x)=\sum_{k\in\Z^{n}}\partial^{\alpha}G_{t}(x+k).
\]
Therefore,
\[
\begin{aligned}
\norm{\partial^{\alpha}H_{t}}_{L^{1}(\Tn)}
\le \sum_{k\in\Z^{n}}\int_{[0,1]^{n}}\abs{\partial^{\alpha}G_{t}(x+k)}\,dx 
= \int_{\R^{n}}\abs{\partial^{\alpha}G_{t}(x)}\,dx
= \norm{\partial^{\alpha}G_{t}}_{L^{1}(\R^{n})}.
\end{aligned}
\]
The desired estimate follows from Lemma~\ref{Lem3.2}.
\end{proof}

\begin{lemma}\label{Lem3.4}
Let \(\alpha\in\Nzero^{n}\), and for \(t>0\) define the symbol
\[
M_{t}^{\alpha}(m)=(2\pi i)^{\abs{\alpha}}m^{\alpha}e^{-4\pi^{2}\abs{m}^{2}t},
\qquad m\in\Z^{n}.
\]
Let \(T_{t}^{\alpha}\) be the corresponding Fourier multiplier on \(\Tn\). Then, for every \(1\le p<\infty\),
\[
\norm{T_{t}^{\alpha}}_{cb(L^{p}(\Tn))}
\le
\frac{C_{\alpha,n}}{t^{\abs{\alpha}/2}},
\qquad \forall\; t>0.
\]
\end{lemma}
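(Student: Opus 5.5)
The plan is to realize \(T_{t}^{\alpha}\) as convolution with \(\partial^{\alpha}H_{t}\) on \(\Tn\), and then to use the fact that a convolution operator by an \(L^{1}\) kernel is completely bounded on \(L^{p}(\Tn)\) with cb-norm at most the \(L^{1}\) norm of the kernel. The bound then follows from Lemma~\ref{Lem3.3}.

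\emph{Step 1: identify the kernel.} One must fix a normalization. With the convention \(\widehat f(m)=\int_{\Tn}f(x)e^{-2\pi i m\cdot x}\,dx\), the symbol \(e^{-4\pi^{2}|m|^{2}t}\) is the Fourier transform of the periodization of \((4\pi t)^{-n/2}e^{-|x|^{2}/(4t)}\), which is exactly \(H_{t}\). This follows from Poisson summation and the Gaussian Fourier transform \(\widehat{G_{t}}(\xi)=e^{-4\pi^{2}|\xi|^{2}t}\). Differentiating under the Fourier series, which is legitimate by the rapid decay of the coefficients, gives
\[
\widehat{\partial^{\alpha}H_{t}}(m)=(2\pi i)^{|\alpha|}m^{\alpha}e^{-4\pi^{2}|m|^{2}t}=M_{t}^{\alpha}(m).
\]
Consequently, for trigonometric polynomials \(f\) (and then by density), we have \(T_{t}^{\alpha}f=(\partial^{\alpha}H_{t})*f\).

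\emph{Step 2: the complete bound.} For \(r\ge1\), the amplification \(\operatorname{id}_{M_{r}}\otimes T_{t}^{\alpha}\) acts on \(F\in L^{p}(\Tn;S^{p}_{r})\) by
\[
F\mapsto\int_{\Tn}\partial^{\alpha}H_{t}(y)\,F(\cdot-y)\,dy,
\]
which is a Bochner integral with values in \(L^{p}(\Tn;S^{p}_{r})\). By Minkowski's integral inequality and translation invariance of the \(L^{p}(\Tn;S^{p}_{r})\) norm,
\[
\|(\operatorname{id}\otimes T_{t}^{\alpha})F\|\le\|\partial^{\alpha}H_{t}\|_{L^{1}(\Tn)}\|F\|.
\]
This estimate is uniform in \(r\) and valid for every \(1\le p<\infty\). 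Taking the supremum over \(r\) and applying Lemma~\ref{Lem3.3} gives
\[
\|T_{t}^{\alpha}\|_{cb(L^{p}(\Tn))}\le C_{\alpha,n}t^{-|\alpha|/2}.
\]

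The only delicate point is Step 1, namely matching the normalization of the Fourier transform so that the periodized Gaussian \(H_{t}\), as defined in the paper with \(G_{t}=(4\pi t)^{-n/2}e^{-|x|^{2}/4t}\), has coefficients exactly \(e^{-4\pi^{2}|m|^{2}t}\). This is a routine Poisson summation check. Step 2 is standard, since it only uses that translations are isometries on the vector-valued \(L^{p}\) space and that the operator commutes with the matrix amplification. A density argument then extends the estimate from trigonometric polynomials to all of \(L^{p}(\Tn;S^{p}_{r})\).
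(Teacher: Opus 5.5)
Your proposal is correct and follows essentially the same route as the paper: you identify $T_t^\alpha$ as convolution with the scalar kernel $\partial^\alpha H_t$, and you note that the $r$-fold amplification is convolution by the same kernel on $L^p(\Tn;S_r^p)$. You then bound it by $\|\partial^\alpha H_t\|_{L^1(\Tn)}$, uniformly in $r$, and apply Lemma~\ref{Lem3.3}. Your Minkowski-plus-translation-invariance argument is just the proof of the vector-valued Young inequality the paper cites, and your Poisson-summation check of the normalization $\widehat{G_t}(\xi)=e^{-4\pi^2|\xi|^2t}$ supplies a detail the paper states without verification.
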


\begin{proof}
The Fourier series of \(H_{t}\) is
\[
H_{t}(x)=\sum_{m\in\Z^{n}}e^{-4\pi^{2}\abs{m}^{2}t}e^{2\pi i m\cdot x},
\]
so
\[
\partial^{\alpha}H_{t}(x)
=
\sum_{m\in\Z^{n}}M_{t}^{\alpha}(m)e^{2\pi i m\cdot x}.
\]
Thus \(T_{t}^{\alpha}\) is convolution by the scalar kernel \(\partial^{\alpha}H_{t}\).

Let \(r\in\N\). For \(F\in L^{p}(\Tn;S_{r}^{p})\), the amplification \(\operatorname{id}_{M_{r}}\otimes T_{t}^{\alpha}\) acts by convolution with the scalar kernel:
\[
(\operatorname{id}_{M_{r}}\otimes T_{t}^{\alpha})F=(\partial^{\alpha}H_{t})*F.
\]
By the vector-valued Young inequality for convolution on \(\Tn\), applied to \(S_{r}^{p}\)-valued functions,
\[
\norm{(\operatorname{id}_{M_{r}}\otimes T_{t}^{\alpha})F}_{L^{p}(\Tn;S_{r}^{p})}
\le
\norm{\partial^{\alpha}H_{t}}_{L^{1}(\Tn)}
\norm{F}_{L^{p}(\Tn;S_{r}^{p})}.
\]
Using Lemma~\ref{Lem3.3}, we obtain
\[
\norm{(\operatorname{id}_{M_{r}}\otimes T_{t}^{\alpha})F}_{L^{p}(\Tn;S_{r}^{p})}
\le
\frac{C_{\alpha,n}}{t^{\abs{\alpha}/2}}
\norm{F}_{L^{p}(\Tn;S_{r}^{p})}.
\]
The constant is independent of \(r\). Taking the supremum over all \(r\in\N\) gives the completely bounded estimate.
\end{proof}

\begin{lemma}\label{Lem3.5}
Let \(1<p<\infty\), and let \(\varphi:\Z^{n}\to\C\). Denote by \(M^{0}_{\varphi}\) the Fourier multiplier associated with \(\varphi\) on the classical torus \(\Tn\). Assume that \(M^{0}_{\varphi}\) is completely bounded on \(L^{p}(\Tn)\). Then the Fourier multiplier \(M^{\theta}_{\varphi}\) associated with the same symbol on \(\Tq\) is completely bounded on \(L^{p}(\Tq)\), and
\[
\norm{M^{\theta}_{\varphi}}_{cb(L^{p}(\Tq))}
=
\norm{M^{0}_{\varphi}}_{cb(L^{p}(\Tn))}.
\]
\end{lemma}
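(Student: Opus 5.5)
This is the transference theorem of Chen--Xu--Yin \cite{ChenXuYin}. I would prove it by realizing $\Tq$ as a twisted diagonal inside a larger algebra in both directions, so that each completely bounded norm dominates the other.

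\textbf{Step 1: the inequality $\le$.} Consider the trace-preserving normal unital $*$-homomorphism
\[
\pi:\Tq\to L^{\infty}(\Tn)\bar\otimes\Tq,\qquad \pi(U^{m})=e_{m}\otimes U^{m},\quad e_{m}(x)=e^{2\pi i m\cdot x}.
\]
It is a $*$-homomorphism because the $e_m$ are central characters and the $U^m$ satisfy the twisted relations. It is trace preserving because both sides give $\delta_{m,0}$ on $U^m$. Hence $\pi$ extends to a complete isometry $L^{p}(\Tq)\to L^{p}(\Tn;L^{p}(\Tq))$, and this stays true after tensoring with $M_r$.

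On the Fourier basis one checks the intertwining identity
\[
\pi\circ M^{\theta}_{\varphi}=(M^{0}_{\varphi}\otimes\operatorname{id}_{L^p(\Tq)})\circ\pi .
\]
Next, $M^{0}_{\varphi}\otimes\operatorname{id}$ is bounded on $L^p(L^\infty(\Tn)\bar\otimes\Tq\bar\otimes M_r)$ with norm at most $\|M^{0}_{\varphi}\|_{cb}$. This is where complete boundedness is really used. A cb map on $L^p(\Tn)$ extends with the same norm to $L^p(\Tn;L^p(\mathcal M))$ for any hyperfinite $\mathcal M$, by Pisier's theory \cite{P1988}. The algebra $\Tq\bar\otimes M_r$ is hyperfinite, since it is the crossed-product/limit of finite-dimensional algebras, being amenable as a twisted group algebra of $\Z^n$.

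Combining these, for $F\in M_r(L^p(\Tq))$ we get
\[
\|(\operatorname{id}\otimes M^{\theta}_{\varphi})F\|=\|(\operatorname{id}\otimes\pi)(\operatorname{id}\otimes M^{\theta}_{\varphi})F\|\le\|M^{0}_{\varphi}\|_{cb}\|F\|.
\]

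\textbf{Step 2: the inequality $\ge$.} Reverse the roles using the embedding
\[
\sigma:L^{\infty}(\Tn)\to\Tq\bar\otimes\mathbb T^{n}_{-\theta},\qquad e_{m}\mapsto U^{m}\otimes V^{m},
\]
where $V^{m}$ are the generators of $\mathbb T^{n}_{-\theta}$. The twisting phases cancel, so the image is commutative and $\sigma$ is a trace-preserving $*$-homomorphism. It intertwines $M^{0}_{\varphi}$ with $M^{\theta}_{\varphi}\otimes\operatorname{id}$. The same argument then gives
\[
\|M^{0}_{\varphi}\|_{cb}\le\|M^{\theta}_{\varphi}\otimes\operatorname{id}_{L^p(\mathbb T^{n}_{-\theta})}\|\le\|M^{\theta}_{\varphi}\|_{cb}.
\]
The second inequality again uses hyperfiniteness, now of $\mathbb T^{n}_{-\theta}\bar\otimes M_r$.

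\textbf{Main obstacle.} The delicate step is justifying that a completely bounded multiplier tensored with the identity on $L^p$ of a hyperfinite (not matrix) algebra is bounded with the same constant. I would handle this by approximating $\Tq$ in $L^p$ via conditional expectations onto finite-dimensional subalgebras. Alternatively, I would cite Pisier's result that the cb norm controls $\operatorname{id}_{L^p(\mathcal M)}\otimes T$ for hyperfinite $\mathcal M$, or simply invoke \cite[Cor.~3.5]{ChenXuYin} directly, since the lemma is exactly their transference theorem.

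The remaining checks are routine: the $*$-homomorphism property, trace preservation, and intertwining on the dense set $\A$. Together with a density argument they complete the proof.
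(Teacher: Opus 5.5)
Your proposal is correct. The paper gives no argument of its own here: its proof is a one-line citation of the Chen--Xu--Yin transference theorem \cite[Theorem~7.3]{ChenXuYin}. What you have written is essentially the proof behind that citation.

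The twisted diagonal $\pi(U^{m})=e_{m}\otimes U^{m}$ gives $\|M^{\theta}_{\varphi}\|_{cb}\le\|M^{0}_{\varphi}\|_{cb}$. The commutative copy of $L^{\infty}(\Tn)$ spanned by the $U^{m}\otimes V^{m}$ inside $\Tq\bar\otimes\mathbb{T}^{n}_{-\theta}$ gives the reverse inequality; the cocycles do satisfy $\omega_{\theta}(m,k)\,\omega_{-\theta}(m,k)=1$, so the phases cancel as you say.

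In both directions the only nontrivial input is $\|\mathrm{id}_{L^{p}(\mathcal M)}\otimes T\|\le\|T\|_{cb}$ for hyperfinite $\mathcal M$. Your fallback justification via finite-dimensional approximation is sound:
\begin{itemize}
\item Choose an increasing sequence of finite-dimensional subalgebras $\mathcal M_{j}$ with weak$^{*}$-dense union.
\item Each $\mathcal M_{j}$ is a finite direct sum of matrix algebras with weighted traces, so the bound on $L^{p}(\mathcal M_{j}\bar\otimes\mathcal N)$ is just the definition of the cb norm.
\item The trace-preserving expectations $E_{j}\otimes\mathrm{id}$ commute with $\mathrm{id}\otimes T$ on algebraic tensors.
\item They converge strongly to the identity on $L^{p}(\mathcal M\bar\otimes\mathcal N)$.
\end{itemize}

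Compared with the bare citation, your version shows exactly where complete (not mere) boundedness and hyperfiniteness are used. It also shows that nothing in the argument needs $p>1$.

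One wording fix: $\Tq\bar\otimes M_{r}$ and $\mathbb{T}^{n}_{-\theta}\bar\otimes M_{r}$ are hyperfinite because they are injective, and Connes' theorem then applies. Injectivity holds because $\Tq$ is, for instance, an iterated crossed product of $L^{\infty}(\mathbb{T})$ by actions of $\Z$, or a twisted group von Neumann algebra of the amenable group $\Z^{n}$. Being a limit of finite-dimensional algebras is the conclusion, not the reason.
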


\begin{proof}
This is the transference theorem for completely bounded Fourier multipliers on quantum tori, see \cite[Theorem~7.3]{ChenXuYin}.
\end{proof}

\begin{proposition}\label{Prop3.6}
Let \(\alpha\in\Nzero^{n}\) and \(1<p<\infty\). For every \(t>0\), the operator \(\delta^{\alpha}P_{t}\), initially defined on \(\A\), extends uniquely to a completely bounded operator on \(L^{p}(\Tq)\). Moreover, there exists a constant \(C_{\alpha,n}>0\), independent of \(\theta\) and \(p\), such that
\[
\norm{\delta^{\alpha}P_{t}}_{cb(L^{p}(\Tq))}
\le
\frac{C_{\alpha,n}}{t^{\abs{\alpha}/2}},
\qquad \forall\; t>0.
\]
Consequently,
\[
\norm{\delta^{\alpha}P_{t}(a)}_{L^{p}(\Tq)}
\le
\frac{C_{\alpha,n}}{t^{\abs{\alpha}/2}}
\norm{a}_{L^{p}(\Tq)},
\qquad \forall\; a\in L^{p}(\Tq),\quad t>0.
\]
\end{proposition}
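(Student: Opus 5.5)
The plan is to realize \(\delta^{\alpha}P_{t}\) on \(\A\) as the Fourier multiplier on \(\Tq\) with symbol \(M_{t}^{\alpha}\) from Lemma~\ref{Lem3.4}, and then transfer the classical bound. By Lemma~\ref{Lem3.1}, for \(a\in\A\),
\[
\delta^{\alpha}P_{t}(a)=\sum_{m\in\Z^{n}}M_{t}^{\alpha}(m)\widehat a(m)U^{m},
\]
so on \(\A\) the operator \(\delta^{\alpha}P_{t}\) coincides with \(M^{\theta}_{\varphi}\) for \(\varphi=M_{t}^{\alpha}\).

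On the classical torus, Lemma~\ref{Lem3.4} shows that \(M^{0}_{\varphi}=T_{t}^{\alpha}\) is completely bounded on \(L^{p}(\Tn)\) with
\[
\norm{T_{t}^{\alpha}}_{cb(L^{p}(\Tn))}\le C_{\alpha,n}t^{-\abs{\alpha}/2}.
\]
The constant there comes from \(\norm{\partial^{\alpha}G_{1}}_{L^{1}(\R^{n})}\) via Lemmas~\ref{Lem3.2} and~\ref{Lem3.3}, so it depends only on \(\alpha\) and \(n\). Applying the transference Lemma~\ref{Lem3.5} for \(1<p<\infty\) then gives
\[
\norm{M^{\theta}_{\varphi}}_{cb(L^{p}(\Tq))}=\norm{T_{t}^{\alpha}}_{cb(L^{p}(\Tn))}\le C_{\alpha,n}t^{-\abs{\alpha}/2}.
\]
This constant is independent of \(\theta\) (transference is an equality) and of \(p\).

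For the extension statement, I use that \(\A\) is dense in \(L^{p}(\Tq)\), which holds by definition as a completion. The bounded operator \(\delta^{\alpha}P_{t}\colon\A\to L^{p}(\Tq)\) therefore has a unique continuous extension. At each matrix level \(r\), \(M_{r}(\A)\) is dense in \(M_{r}(L^{p}(\Tq))\), so the amplified extensions are the continuous extensions of the amplifications. Their norms are bounded by the same cb constant, and the extension agrees with the multiplier \(M^{\theta}_{\varphi}\) furnished by Lemma~\ref{Lem3.5}. The final scalar inequality for all \(a\in L^{p}(\Tq)\) is the case \(r=1\).

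The main obstacle is only the bookkeeping in identifying the extended operator with the transferred multiplier \(M^{\theta}_{\varphi}\), that is, checking that the multiplier in Lemma~\ref{Lem3.5} is defined on polynomials or \(\A\) in a way compatible with Lemma~\ref{Lem3.1}. I would handle this by noting that both operators agree on trigonometric polynomials, which are dense in \(\A\) in every \(L^{p}\) norm, and invoking uniqueness of continuous extensions.
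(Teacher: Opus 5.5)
Your proposal is correct and follows the paper's proof exactly: you identify \(\delta^{\alpha}P_{t}\) on \(\A\) with the Fourier multiplier of symbol \(M_{t}^{\alpha}\) via Lemma~\ref{Lem3.1}, bound the classical multiplier \(T_{t}^{\alpha}\) in cb-norm by Lemma~\ref{Lem3.4}, and transfer this bound with Lemma~\ref{Lem3.5}. Your density and uniqueness-of-extension bookkeeping adds nothing new mathematically; it only makes explicit what the paper leaves implicit.
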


\begin{proof}
By Lemma \ref{Lem3.1}, \(\delta^{\alpha}P_{t}\) is, on \(\A\), the Fourier multiplier with symbol
\[
M_{t}^{\alpha}(m)=(2\pi i)^{\abs{\alpha}}m^{\alpha}e^{-4\pi^{2}\abs{m}^{2}t},
\qquad m\in\Z^{n}.
\]
Let \(T_{t}^{\alpha}\) be the Fourier multiplier on the classical torus \(\Tn\) with the above symbol. By Lemma~\ref{Lem3.4},
\[
\norm{T_{t}^{\alpha}}_{cb(L^{p}(\Tn))}
\le
\frac{C_{\alpha,n}}{t^{\abs{\alpha}/2}}.
\]
Applying Lemma~\ref{Lem3.5} to the symbol \(M_{t}^{\alpha}\), we conclude that \(\delta^{\alpha}P_{t}\) extends to a completely bounded Fourier multiplier on \(L^{p}(\Tq)\) satisfying the same estimate.
\end{proof}

We derive the main consequences of the basic \(L^{p}\)-estimate.

\begin{proof}[Proof of Theorem \ref{Thm1.1}]
We first prove (i). Since the derivations commute, write
\[
L^{\ell}=(-1)^{\ell}\Bigl(\sum_{j=1}^{n}\delta_{j}^{2}\Bigr)^{\ell}
=
(-1)^{\ell}\sum_{\beta\in\Ical_{\ell}}\binom{\ell}{\beta}\delta^{2\beta},
\]
where
\[
\binom{\ell}{\beta}=\frac{\ell!}{\beta_{1}!\cdots\beta_{n}!},
\qquad
2\beta=(2\beta_{1},\dots,2\beta_{n}).
\]
Therefore, on \( \A \),
\[
\delta^{\alpha}L^{\ell}P_{t}
=
(-1)^{\ell}\sum_{\beta\in\Ical_{\ell}}\binom{\ell}{\beta}\delta^{\alpha+2\beta}P_{t}.
\]
Since \(\abs{\alpha+2\beta}=\abs{\alpha}+2\ell\), Proposition~\ref{Prop3.6} gives
\[
\begin{aligned}
\norm{\delta^{\alpha}L^{\ell}P_{t}}_{cb(L^{p}(\Tq))}
&\le \sum_{\beta\in\Ical_{\ell}}\binom{\ell}{\beta}\norm{\delta^{\alpha+2\beta}P_{t}}_{cb(L^{p}(\Tq))} \\
&\le \sum_{\beta\in\Ical_{\ell}}\binom{\ell}{\beta}\frac{C_{\alpha+2\beta,n}}{t^{\ell+\abs{\alpha}/2}} \\
&\le \frac{C_{\alpha,\ell,n}}{t^{\ell+\abs{\alpha}/2}}.
\end{aligned}
\]
The density of \( \A \) in \(L^{p}(\Tq)\) gives the required extension. Moreover, Lemma~\ref{Lem3.1} gives
\[
\partial_{t}^{\ell}P_{t}(a)=(-L)^{\ell}P_{t}(a),
\qquad a\in\A,
\]
and the time-derivative estimate follows from the preceding bound.

It remains to prove (ii). Assume \((\alpha,\ell)\neq (0,0)\). For \(t>0\), set
\[
k_{t}=\left\lfloor \frac{1}{\sqrt{8\pi^{2}nt}}\right\rfloor,
\qquad
m_{t}=(k_{t},\dots,k_{t})\in\Z^{n},
\qquad
a_{t}=U^{m_{t}}.
\]
For all sufficiently small \(t>0\), one has \(k_t\ge1\), and \(U^{m_t}\) is unitary. Hence
\[
\norm{a_{t}}_{L^{p}(\Tq)}=1,
\qquad 1\le p\le\infty.
\]
Using Lemma~\ref{Lem3.1},
\[
\delta^{\alpha}L^{\ell}P_{t}(a_{t})
=
(2\pi i)^{\abs{\alpha}}(4\pi^{2}\abs{m_{t}}^{2})^{\ell}m_{t}^{\alpha}e^{-4\pi^{2}\abs{m_{t}}^{2}t}U^{m_{t}}.
\]
Therefore
\[
\norm{\delta^{\alpha}L^{\ell}P_{t}(a_{t})}_{L^{p}(\Tq)}
=
(2\pi)^{\abs{\alpha}}(4\pi^{2}\abs{m_{t}}^{2})^{\ell}\abs{m_{t}^{\alpha}}e^{-4\pi^{2}\abs{m_{t}}^{2}t}.
\]
Since all coordinates of \(m_t\) are equal,
\[
\abs{m_{t}^{\alpha}}=k_{t}^{\abs{\alpha}},
\qquad
\abs{m_{t}}^{2}=nk_{t}^{2}.
\]
For all sufficiently small \(t>0\),
\[
k_{t}\ge \frac{1}{2\sqrt{8\pi^{2}nt}},
\]
and hence
\[
k_{t}^{\abs{\alpha}}\ge c_{\alpha,n}t^{-\abs{\alpha}/2},
\qquad
(nk_{t}^{2})^{\ell}\ge c_{\ell,n}t^{-\ell}.
\]
Moreover,
\[
4\pi^{2}\abs{m_{t}}^{2}t
=
4\pi^{2}nk_{t}^{2}t
\le
\frac{1}{2},
\]
so
\[
e^{-4\pi^{2}\abs{m_{t}}^{2}t}\ge e^{-1/2}.
\]
Combining these estimates, we obtain
\[
\norm{\delta^{\alpha}L^{\ell}P_{t}(a_{t})}_{L^{p}(\Tq)}
\ge
\frac{c_{\alpha,\ell,n}}{t^{\ell+\abs{\alpha}/2}}
\]
for all sufficiently small \(t>0\). Since the completely bounded norm dominates the usual \(L^p\)-operator norm, the same lower bound shows that the completely bounded estimate in (i) cannot have a better small-time power.
\end{proof}

\begin{corollary}\label{Cor3.7}
Let \(k,r\in\Nzero\) and \(1<p<\infty\). Then there exists a constant \(C_{k,r,n}>0\), independent of \(\theta\) and \(p\), such that for every \(a\in\A\) and every \(t>0\),
\[
\abs{P_{t}a}_{W^{k+r,p}_{\theta}}
\le
\frac{C_{k,r,n}}{t^{r/2}}\abs{a}_{W^{k,p}_{\theta}}.
\]
Moreover,
\[
\norm{P_{t}a}_{W^{k+r,p}_{\theta}}
\le
C_{k,r,n}\bigl(1+t^{-r/2}\bigr)\norm{a}_{W^{k,p}_{\theta}}.
\]
Consequently, \(P_{t}\) extends to a bounded map from \(W^{k,p}_{\theta}\) into \(W^{k+r,p}_{\theta}\) for every \(t>0\).
\end{corollary}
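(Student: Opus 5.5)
The plan is to reduce everything to Proposition~\ref{Prop3.6} via the commutation identity of Lemma~\ref{Lem3.1}. Fix \(a\in\A\) and a multi-index \(\gamma\in\Ical_{k+r}\). Split \(\gamma=\beta+\eta\) with \(\beta\in\Ical_{r}\) and \(\eta\in\Ical_{k}\); such a splitting always exists by peeling \(r\) units off the coordinates of \(\gamma\), and we fix one choice for each \(\gamma\). Since the derivations commute with each other and with \(P_t\) on \(\A\), we get
\[
\delta^{\gamma}P_{t}a=\delta^{\beta}P_{t}\bigl(\delta^{\eta}a\bigr).
\]
Proposition~\ref{Prop3.6}, applied with the multi-index \(\beta\), then yields
\[
\norm{\delta^{\gamma}P_{t}a}_{L^{p}(\Tq)}\le C_{\beta,n}t^{-r/2}\norm{\delta^{\eta}a}_{L^{p}(\Tq)}\le C_{\beta,n}t^{-r/2}\abs{a}_{W^{k,p}_{\theta}}.
\]
Squaring and summing over the finitely many \(\gamma\in\Ical_{k+r}\) gives the seminorm estimate with \(C_{k,r,n}=\bigl(N_{k+r,n}\bigr)^{1/2}\max_{\beta\in\Ical_r}C_{\beta,n}\). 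This constant is independent of \(\theta\) and \(p\) because the constants in Proposition~\ref{Prop3.6} are.

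For the full norm we must bound \(\abs{P_ta}_{W^{j,p}_\theta}\) for \(0\le j\le k+r\).
\begin{itemize}
\item When \(j\le k\), apply the seminorm estimate with \((k,r)\) replaced by \((j,0)\). This gives the uniform bound \(C\abs{a}_{W^{j,p}_\theta}\), since \(P_t\) is a contraction. Alternatively, Proposition~\ref{Prop3.6} with \(\alpha=0\) gives \(\norm{P_t}\le C_{0,n}\) directly.
\item When \(k<j\le k+r\), write \(j=k+s\) with \(1\le s\le r\). The seminorm estimate gives \(C t^{-s/2}\abs{a}_{W^{k,p}_\theta}\).
\end{itemize}
Finally, use the elementary inequality \(t^{-s/2}\le 1+t^{-r/2}\) for \(0\le s\le r\) and \(t>0\) (compare the cases \(t\ge1\) and \(t<1\)). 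Summing over \(j\) then gives \(\norm{P_ta}_{W^{k+r,p}_\theta}\le C_{k,r,n}(1+t^{-r/2})\norm{a}_{W^{k,p}_\theta}\).

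The extension to \(W^{k,p}_\theta\to W^{k+r,p}_\theta\) follows because \(\A\) is dense in \(W^{k,p}_\theta\) by definition. The estimate shows that \(P_t\) maps Cauchy sequences in \(W^{k,p}_\theta\) to Cauchy sequences in \(W^{k+r,p}_\theta\), so \(P_t\) extends uniquely by continuity. The extension is compatible with the \(L^p\)-extension of \(P_t\), since both norms dominate the \(L^p\) norm. There is no real obstacle here. The only point needing care is the splitting of \(\gamma\) and the bookkeeping of the lower-order terms in the full norm, which is handled by the elementary inequality above.
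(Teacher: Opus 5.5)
Your proposal is correct and follows essentially the same route as the paper: you split each multi-index of order \(k+r\) into an order-\(r\) part placed on \(P_t\) and an order-\(k\) part placed on \(a\), commute via Lemma~\ref{Lem3.1}, apply Proposition~\ref{Prop3.6}, sum over the finite index sets, and then extend by density. The only difference is cosmetic. For the full norm, the paper redoes the splitting for every \(\beta\) with \(\abs{\beta}\le k+r\), putting order \(\min\{j,k\}\) on \(a\), whereas you reuse the seminorm bound at each intermediate order together with \(t^{-s/2}\le 1+t^{-r/2}\); this amounts to the same bookkeeping.
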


\begin{proof}
Let
\[
\widetilde C_{r,n}=\max_{0\le s\le r}\ \max_{\abs{\eta}=s} C_{\eta,n}.
\]
For each \(\beta\in\Ical_{k+r}\), choose a multi-index \(\gamma(\beta)\in\Nzero^{n}\) such that
\[
0\le \gamma(\beta)\le \beta,
\qquad
\abs{\gamma(\beta)}=k,
\]
and set \(\eta(\beta)=\beta-\gamma(\beta)\), so that \(\abs{\eta(\beta)}=r\). Such a choice is possible because \(\abs{\beta}=k+r\). Using Lemma \ref{Lem3.1} and Proposition \ref{Prop3.6}, we obtain
\[
\norm{\delta^{\beta}P_{t}(a)}_{L^{p}(\Tq)}
=
\norm{\delta^{\eta(\beta)}P_{t}(\delta^{\gamma(\beta)}a)}_{L^{p}(\Tq)}
\le
\frac{\widetilde C_{r,n}}{t^{r/2}}\norm{\delta^{\gamma(\beta)}a}_{L^{p}(\Tq)}.
\]
Therefore,
\[
\begin{aligned}
\abs{P_{t}a}_{W^{k+r,p}_{\theta}}^{2}
&= \sum_{\beta\in\Ical_{k+r}}\norm{\delta^{\beta}P_{t}(a)}_{L^{p}(\Tq)}^{2} \\
&\le \frac{\widetilde C_{r,n}^{2}}{t^{r}}\sum_{\beta\in\Ical_{k+r}}\norm{\delta^{\gamma(\beta)}a}_{L^{p}(\Tq)}^{2} \\
&\le \frac{C_{k,r,n}^{2}}{t^{r}}\sum_{\gamma\in\Ical_{k}}\norm{\delta^{\gamma}a}_{L^{p}(\Tq)}^{2}\\
&= \frac{C_{k,r,n}^{2}}{t^{r}}\abs{a}_{W^{k,p}_{\theta}}^{2},
\end{aligned}
\]
which proves the seminorm estimate.

For the full Sobolev norm, let \(0\le j\le k+r\) and \(\beta\in\Ical_{j}\). Put \(s=\max\{0,j-k\}\), so that \(0\le s\le r\). Choose \(\gamma(\beta)\le \beta\) with \(\abs{\gamma(\beta)}=j-s=\min\{j,k\}\) and set \(\eta(\beta)=\beta-\gamma(\beta)\), whence \(\abs{\eta(\beta)}=s\). Then
\[
\norm{\delta^{\beta}P_{t}(a)}_{L^{p}(\Tq)}
\le
\frac{\widetilde C_{r,n}}{t^{s/2}}\norm{\delta^{\gamma(\beta)}a}_{L^{p}(\Tq)}
\le
\widetilde C_{r,n}\bigl(1+t^{-r/2}\bigr)\norm{\delta^{\gamma(\beta)}a}_{L^{p}(\Tq)}.
\]
Summing over all \(\beta\) with \(\abs{\beta}\le k+r\) and using the finiteness of the index sets yields
\[
\norm{P_{t}a}_{W^{k+r,p}_{\theta}}
\le
C_{k,r,n}\bigl(1+t^{-r/2}\bigr)\norm{a}_{W^{k,p}_{\theta}}.
\]
Let \(a\in W^{k,p}_{\theta}\), and choose \(a_{j}\in\A\) such that \(a_{j}\to a\) in \(W^{k,p}_{\theta}\). The estimate above gives
\[
\|P_{t}a_{j}-P_{t}a_{i}\|_{W^{k+r,p}_{\theta}}
\le
C_{k,r,n}\bigl(1+t^{-r/2}\bigr)
\|a_{j}-a_{i}\|_{W^{k,p}_{\theta}}.
\]
Thus \(P_{t}a_{j}\) is Cauchy in \(W^{k+r,p}_{\theta}\). Its limit is independent of the approximating sequence, and this defines the bounded extension of \(P_{t}\) from \(W^{k,p}_{\theta}\) into \(W^{k+r,p}_{\theta}\).
\end{proof}

\begin{corollary}\label{Cor3.8}
Let \(1<p<\infty\). There exists a constant \(C_{n}>0\), independent of \(\theta\) and \(p\), such that for every \(a\in L^{p}(\Tq)\) and every \(t>0\),
\[
\norm{\Hesst(P_{t}a)}_{\ell^{2}_{n}(L^{p}(\Tq))}
\le
\frac{C_{n}}{t}\norm{a}_{L^{p}(\Tq)}.
\]
\end{corollary}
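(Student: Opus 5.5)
The plan is to reduce Corollary~\ref{Cor3.8} entrywise to Proposition~\ref{Prop3.6} with $\abs{\alpha}=2$, and then pass from $\A$ to $L^{p}(\Tq)$ by density.

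First take $a\in\A$. For each pair $1\le i,j\le n$ the entry $\delta_{i}\delta_{j}P_{t}a$ equals $\delta^{\alpha_{ij}}P_{t}a$, where $\alpha_{ij}=e_{i}+e_{j}$ is a multi-index with $\abs{\alpha_{ij}}=2$. This uses the commutation of the derivations on the Fourier basis (Lemma~\ref{Lem3.1}). Proposition~\ref{Prop3.6} then gives
\[
\norm{\delta_{i}\delta_{j}P_{t}a}_{L^{p}(\Tq)}\le \frac{C_{\alpha_{ij},n}}{t}\norm{a}_{L^{p}(\Tq)},
\]
with constants independent of $\theta$ and $p$. Set $\widetilde C_{n}=\max_{\abs{\alpha}=2}C_{\alpha,n}$, which is a finite maximum. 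Squaring and summing over the $n^{2}$ entries yields
\[
\norm{\Hesst(P_{t}a)}_{\ell^{2}_{n}(L^{p}(\Tq))}\le \frac{n\widetilde C_{n}}{t}\norm{a}_{L^{p}(\Tq)},
\]
so we may take $C_{n}=n\widetilde C_{n}$.

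For general $a\in L^{p}(\Tq)$, we need to fix what $\Hesst(P_{t}a)$ means. We interpret each entry as the unique completely bounded extension of $\delta_{i}\delta_{j}P_{t}$ given by Proposition~\ref{Prop3.6}, applied to $a$. Choose $a_{k}\in\A$ with $a_{k}\to a$ in $L^{p}$. Each entry $\delta_{i}\delta_{j}P_{t}a_{k}$ then converges in $L^{p}$ to the extended operator applied to $a$, because that operator is bounded. The inequality established on $\A$ therefore passes to the limit.

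The only point requiring care is this density step, namely checking that the extended operator agrees with $\delta_{i}\delta_{j}$ applied to $P_{t}a$ in the natural sense. This is handled by the uniqueness of the extension in Proposition~\ref{Prop3.6}.
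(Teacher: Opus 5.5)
Your proposal is correct and follows essentially the same route as the paper: apply Proposition~\ref{Prop3.6} to each entry $\delta_i\delta_j P_t=\delta^{e_i+e_j}P_t$, interpreted via its bounded extension to $L^{p}(\Tq)$, and sum the squares. The only difference is bookkeeping. The paper groups the off-diagonal entries by multi-index, which gives a factor $2$ over $\sum_{\alpha\in\Ical_2}$. You sum all $n^{2}$ entries against the maximal constant. Both yield a constant depending only on $n$.
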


\begin{proof}
Using the bounded extensions of the operators \(\delta^{\alpha}P_{t}\) for \(\abs{\alpha}=2\), we may write
\[
\begin{aligned}
\norm{\Hesst(P_{t}a)}_{\ell^{2}_{n}(L^{p}(\Tq))}^{2}
&=
\sum_{i=1}^{n}\norm{\delta_{i}^{2}P_{t}(a)}_{L^{p}(\Tq)}^{2}
+
\sum_{\substack{1\le i,j\le n\\ i\ne j}}\norm{\delta_{i}\delta_{j}P_{t}(a)}_{L^{p}(\Tq)}^{2} \\
&=
\sum_{i=1}^{n}\norm{\delta_{i}^{2}P_{t}(a)}_{L^{p}(\Tq)}^{2}
+
2\sum_{1\le i<j\le n}\norm{\delta_{i}\delta_{j}P_{t}(a)}_{L^{p}(\Tq)}^{2} \\
&\le
2\sum_{\alpha\in\Ical_{2}}\norm{\delta^{\alpha}P_{t}(a)}_{L^{p}(\Tq)}^{2}\\
&\le
\frac{C_{n}^{2}}{t^{2}}\norm{a}_{L^{p}(\Tq)}^{2},
\end{aligned}
\]
after enlarging the constant if necessary. Taking square roots completes the proof.
\end{proof}

\begin{remark}
Taking \(\alpha=0\) in Theorem~\ref{Thm1.1}(i) gives
\[
\norm{L^{\ell}P_{t}(a)}_{L^{p}(\Tq)}
\le
\frac{C_{\ell,n}}{t^{\ell}}\norm{a}_{L^{p}(\Tq)},
\qquad 1<p<\infty,
\]
which shows that the heat semigroup gains one power of \(t^{-1}\) for each application of the generator.
\end{remark}

\begin{remark}
The pointwise estimates proved in this section complement the integrated semigroup characterizations of function spaces on quantum tori obtained in \cite{XiongXuYin}. In particular, the Theorem~\ref{Thm1.1}(i) isolates the single-time operator norm of \(\delta^{\alpha}L^{\ell}P_{t}\), rather than a Besov- or Triebel-Lizorkin-type square function built from the whole semigroup orbit.
\end{remark}

\section{Sobolev algebra and polynomial nonlinear estimates}\label{sec:sobolev-algebra}

We prove the nonlinear estimates needed for the semilinear heat equation. The arguments are included in detail because the noncommutative product is not pointwise multiplication. The key observation is that the Fourier coefficients of a product are given by a twisted convolution whose twisting factor has modulus one.

For \(r,s\in\Z^{n}\), there exists a scalar \(\omega_{\theta}(r,s)\in\C\), \(\abs{\omega_{\theta}(r,s)}=1\), such that
\[
U^{r}U^{s}=\omega_{\theta}(r,s)U^{r+s}.
\]
Consequently, if \(a,b\in\A\), then
\begin{equation}\label{eq4.1}
\widehat{ab}(m)
=
\sum_{r+s=m}\omega_{\theta}(r,s)\widehat a(r)\widehat b(s),
\qquad m\in\Z^{n}.
\end{equation}
Only the bound \(\abs{\omega_{\theta}(r,s)}=1\) will be used.

\begin{lemma}\label{Lem4.1}
Let \(k\in\N\) satisfy \(k>n/2\). Then there exists a constant \(C_{k,n}>0\), independent of \(\theta\), such that
\[
\sum_{m\in\Z^{n}}\abs{\widehat a(m)}
\le
C_{k,n}\norm{a}_{H^{k}_{\theta}},
\qquad a\in H^{k}_{\theta}.
\]
In particular,
\[
\norm{a}_{L^{\infty}(\Tq)}
\le
C_{k,n}\norm{a}_{H^{k}_{\theta}},
\qquad a\in H^{k}_{\theta}.
\]
\end{lemma}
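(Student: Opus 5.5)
The plan is to prove the $\ell^{1}$ bound on the Fourier coefficients by the Cauchy--Schwarz inequality with the weight $(1+4\pi^{2}\abs{m}^{2})^{k/2}$. Then we deduce the $L^{\infty}$ bound from the fact that each $U^{m}$ is unitary. For $a\in H^{k}_{\theta}$ we write
\[
\sum_{m\in\Z^{n}}\abs{\widehat a(m)}
=
\sum_{m\in\Z^{n}}\bigl(1+4\pi^{2}\abs{m}^{2}\bigr)^{-k/2}\cdot\bigl(1+4\pi^{2}\abs{m}^{2}\bigr)^{k/2}\abs{\widehat a(m)}
\le
\Bigl(\sum_{m\in\Z^{n}}\bigl(1+4\pi^{2}\abs{m}^{2}\bigr)^{-k}\Bigr)^{1/2}\norm{a}_{H^{k}_{\theta}},
\]
using Definition~\ref{Def2.6} for the second factor. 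The constant is $C_{k,n}=\bigl(\sum_{m}(1+4\pi^{2}\abs{m}^{2})^{-k}\bigr)^{1/2}$. It obviously does not depend on $\theta$, since only the Fourier coefficients enter.

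The one point requiring care is the finiteness of this lattice sum, and this is where $k>n/2$ is used. We compare with an integral. Since $(1+4\pi^{2}\abs{m}^{2})^{-k}\le (1+\abs{m}^{2})^{-k}$, it suffices to bound $\sum_{m}(1+\abs{m}^{2})^{-k}$. Grouping lattice points in dyadic shells $2^{j}\le\abs{m}<2^{j+1}$, each shell has $O(2^{jn})$ points, so the sum is dominated by $1+C_n\sum_{j\ge0}2^{jn}2^{-2jk}$. This series converges because $2k>n$. Alternatively, we can compare with $\int_{\R^{n}}(1+\abs{x}^{2}/4)^{-k}\,dx<\infty$ via unit cubes.

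For the operator bound, the absolute summability just proved shows that the series $\sum_{m}\widehat a(m)U^{m}$ converges absolutely in the $C^{*}$-norm, because $\norm{U^{m}}_{L^{\infty}(\Tq)}=1$. Its norm limit $b\in\mathcal{A}_{\theta}\subset L^{\infty}(\Tq)$ satisfies $\norm{b}_{L^\infty(\Tq)}\le\sum_{m}\abs{\widehat a(m)}$. Since norm convergence implies $L^{2}$ convergence and the $L^{2}$-Fourier expansion of $a$ is the same series, we get $b=a$ in $L^{2}(\Tq)$. Hence $a\in L^{\infty}(\Tq)$ with $\norm{a}_{L^{\infty}(\Tq)}\le C_{k,n}\norm{a}_{H^{k}_{\theta}}$. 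No step is a real obstacle; the only genuine input is the convergence of the lattice sum for $k>n/2$.
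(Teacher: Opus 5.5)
Your proof is correct and follows the paper's argument: Cauchy--Schwarz against the weight $(1+4\pi^{2}\abs{m}^{2})^{k/2}$, finiteness of the lattice sum exactly when $k>n/2$, and $\norm{a}_{L^{\infty}(\Tq)}\le\sum_{m}\abs{\widehat a(m)}$ from the unitarity of each $U^{m}$. The only difference is that the paper proves the bounds on $\A$ and extends by density, whereas you work directly on $H^{k}_{\theta}$ and identify the norm-convergent series with $a$ through $L^{2}$-convergence. This makes the extension step explicit, and your dyadic-shell count verifies the convergence of the lattice sum, which the paper only asserts.
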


\begin{proof}
For \(a\in\A\), the Cauchy--Schwarz inequality gives
\[
\sum_{m\in\Z^{n}}\abs{\widehat a(m)}
\le
\left(\sum_{m\in\Z^{n}}(1+4\pi^{2}\abs{m}^{2})^{-k}\right)^{1/2}
\left(\sum_{m\in\Z^{n}}(1+4\pi^{2}\abs{m}^{2})^{k}\abs{\widehat a(m)}^{2}\right)^{1/2}.
\]
The first series is finite precisely under the condition \(k>n/2\). This proves the \(\ell^{1}\)-estimate. Since each \(U^{m}\) is unitary,
\[
\norm{a}_{L^{\infty}(\Tq)}
\le
\sum_{m\in\Z^{n}}\abs{\widehat a(m)}.
\]
The conclusion for all \(a\in H^{k}_{\theta}\) follows by density of \(\A\) in \(H^{k}_{\theta}\).
\end{proof}

\begin{proposition}\label{Prop4.2}
Let \(k\in\N\) satisfy \(k>n/2\). Then \(H^{k}_{\theta}\) is a Banach algebra. More precisely, there exists a constant \(A_{k,n}>0\), independent of \(\theta\), such that
\[
\norm{ab}_{H^{k}_{\theta}}
\le
A_{k,n}\norm{a}_{H^{k}_{\theta}}\norm{b}_{H^{k}_{\theta}},
\qquad a,b\in H^{k}_{\theta}.
\]
\end{proposition}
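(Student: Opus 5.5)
The argument works entirely on the Fourier side, using the twisted convolution formula \eqref{eq4.1} and the fact that $\abs{\omega_\theta(r,s)}=1$. This reduces the estimate to the classical commutative one for sequences. Write $\langle m\rangle=(1+4\pi^2\abs{m}^2)^{1/2}$. It suffices to prove the estimate for $a,b\in\A$ and then pass to general $a,b\in H^k_\theta$ by density. For the density step we use that $a_j\to a$ in $H^k_\theta$ implies convergence in $L^\infty(\Tq)$, by Lemma~\ref{Lem4.1}. Hence $a_jb_j\to ab$ in $L^2(\Tq)$, and $(a_jb_j)$ is Cauchy in $H^k_\theta$ by the estimate and bilinearity, so the limits agree.

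For $a,b\in\A$, \eqref{eq4.1} gives
\[
\langle m\rangle^k\abs{\widehat{ab}(m)}\le \sum_{r+s=m}\langle r+s\rangle^k\abs{\widehat a(r)}\abs{\widehat b(s)}.
\]
We use the elementary inequality $\langle r+s\rangle\le \langle r\rangle+\langle s\rangle$, which holds because $\langle\cdot\rangle$ is the Euclidean norm of $(1,2\pi r)$ and $\langle r+s\rangle\le \langle r\rangle+\langle s\rangle$ follows from $1\le 1+1$. Together with $(x+y)^k\le 2^{k-1}(x^k+y^k)$, this yields
\[
\langle m\rangle^k\abs{\widehat{ab}(m)}\le 2^{k-1}\Bigl[(F*\beta)(m)+(\alpha*G)(m)\Bigr],
\]
where $\alpha(r)=\abs{\widehat a(r)}$, $\beta(s)=\abs{\widehat b(s)}$, $F=\langle\cdot\rangle^k\alpha$, $G=\langle\cdot\rangle^k\beta$, and $*$ is ordinary convolution on $\Z^n$.

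Taking $\ell^2(\Z^n)$ norms and applying Young's inequality $\ell^2*\ell^1\subset\ell^2$ gives
\[
\norm{ab}_{H^k_\theta}\le 2^{k-1}\bigl(\norm{F}_{\ell^2}\norm{\beta}_{\ell^1}+\norm{\alpha}_{\ell^1}\norm{G}_{\ell^2}\bigr).
\]
Here $\norm{F}_{\ell^2}=\norm{a}_{H^k_\theta}$ and $\norm{G}_{\ell^2}=\norm{b}_{H^k_\theta}$. By Lemma~\ref{Lem4.1}, which uses $k>n/2$, we have $\norm{\alpha}_{\ell^1}\le C_{k,n}\norm{a}_{H^k_\theta}$ and similarly for $\beta$. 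Combining these bounds gives the claim with $A_{k,n}=2^kC_{k,n}$, and this constant does not depend on $\theta$. Together with completeness of $H^k_\theta$, this shows that $H^k_\theta$ is a Banach algebra, up to renormalizing to make the constant $1$ if desired.

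The main point needing care is the weight inequality $\langle r+s\rangle^k\lesssim \langle r\rangle^k+\langle s\rangle^k$, which is what splits the derivative load onto one factor. After that, everything is commutative sequence estimates, because the twist has modulus one. A secondary point is checking that the product $ab$ of two $H^k_\theta$ elements is well defined in $L^2(\Tq)$ and that its Fourier coefficients are still given by \eqref{eq4.1}. This follows from the $L^\infty$ bound in Lemma~\ref{Lem4.1} and the absolute convergence of the convolution sum, which holds because $\alpha,\beta\in\ell^1$.
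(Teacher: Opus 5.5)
Your proposal is correct and follows the paper's own proof. It works on the Fourier side via the twisted convolution \eqref{eq4.1} with $\abs{\omega_\theta(r,s)}=1$, splits the weight as $\langle r+s\rangle^k\le C_k(\langle r\rangle^k+\langle s\rangle^k)$, applies Young's inequality $\ell^2*\ell^1\to\ell^2$, uses Lemma~\ref{Lem4.1} for the $\ell^1$ bound, and concludes by density. Beyond the paper, you make $C_k=2^{k-1}$ explicit and spell out the limit-identification step in the density argument, which the paper only sketches.
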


\begin{proof}
We first prove the estimate for \(a,b\in\A\). Set
\[
\langle m\rangle=(1+4\pi^{2}\abs{m}^{2})^{1/2}.
\]
For \(m=r+s\), the elementary inequality
\[
\langle m\rangle^{k}
\le
C_{k}\bigl(\langle r\rangle^{k}+\langle s\rangle^{k}\bigr)
\]
holds. Using \eqref{eq4.1} and \(\abs{\omega_{\theta}(r,s)}=1\), we obtain
\[
\langle m\rangle^{k}\abs{\widehat{ab}(m)}
\le
C_{k}\sum_{r+s=m}\langle r\rangle^{k}\abs{\widehat a(r)}\abs{\widehat b(s)}
+
C_{k}\sum_{r+s=m}\abs{\widehat a(r)}\langle s\rangle^{k}\abs{\widehat b(s)}.
\]
Taking the \(\ell^{2}(\Z^{n})\)-norm and using Young's convolution inequality \(\ell^{2}*\ell^{1}\to\ell^{2}\), we get
\[
\norm{ab}_{H^{k}_{\theta}}
\le
C_{k}\norm{a}_{H^{k}_{\theta}}\sum_{s\in\Z^{n}}\abs{\widehat b(s)}
+
C_{k}\left(\sum_{r\in\Z^{n}}\abs{\widehat a(r)}\right)\norm{b}_{H^{k}_{\theta}}.
\]
The Lemma~\ref{Lem4.1} gives
\[
\norm{ab}_{H^{k}_{\theta}}
\le
A_{k,n}\norm{a}_{H^{k}_{\theta}}\norm{b}_{H^{k}_{\theta}}.
\]
The extension to arbitrary \(a,b\in H^{k}_{\theta}\) follows by approximating them in \(H^{k}_{\theta}\) by elements of \(\A\) and using the estimate above.
\end{proof}

\begin{proposition}\label{Prop4.3}
Let \(k\in\N\) satisfy \(k>n/2\). Let \(q\in\N\), and for \(0\le\nu\le q\) and \(1\le\mu\le N_{\nu}\), let
\[
b_{\nu,\mu,0},\dots,b_{\nu,\mu,\nu}\in H^{k}_{\theta}.
\]
Define the noncommutative polynomial
\begin{equation}\label{eq4.2}
\mathcal P(u)
=
\sum_{\nu=0}^{q}\sum_{\mu=1}^{N_{\nu}}
 b_{\nu,\mu,0}u b_{\nu,\mu,1}u\cdots u b_{\nu,\mu,\nu},
\end{equation}
where the term with \(\nu=0\) is interpreted as \(b_{0,\mu,0}\). Then \(\mathcal P:H^{k}_{\theta}\to H^{k}_{\theta}\) is locally Lipschitz. More precisely, there exists a constant \(C_{\mathcal P,k,n}>0\) such that
\begin{equation}\label{eq4.3}
\norm{\mathcal P(u)}_{H^{k}_{\theta}}
\le
C_{\mathcal P,k,n}\bigl(1+\norm{u}_{H^{k}_{\theta}}^{q}\bigr),
\qquad u\in H^{k}_{\theta},
\end{equation}
and for every \(R>0\) there exists \(C_{\mathcal P,k,n,R}>0\) such that
\begin{equation}\label{eq4.4}
\norm{\mathcal P(u)-\mathcal P(v)}_{H^{k}_{\theta}}
\le
C_{\mathcal P,k,n,R}\norm{u-v}_{H^{k}_{\theta}}
\end{equation}
whenever \(\norm{u}_{H^{k}_{\theta}}\le R\) and \(\norm{v}_{H^{k}_{\theta}}\le R\).
\end{proposition}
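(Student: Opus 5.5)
The plan is to reduce everything to Proposition~\ref{Prop4.2}, iterated over the finitely many factors in each monomial. Write \(A=A_{k,n}\) and \(B=\max_{\nu,\mu,j}\norm{b_{\nu,\mu,j}}_{H^{k}_{\theta}}\), which is finite since only finitely many coefficients occur. For a single monomial
\[
m_{\nu,\mu}(u)=b_{\nu,\mu,0}u b_{\nu,\mu,1}u\cdots u b_{\nu,\mu,\nu},
\]
apply Proposition~\ref{Prop4.2} inductively to the product of \(2\nu+1\) factors. By associativity in \(\Tq\), this gives
\[
\norm{m_{\nu,\mu}(u)}_{H^{k}_{\theta}}\le A^{2\nu}B^{\nu+1}\norm{u}_{H^{k}_{\theta}}^{\nu}.
\]
In particular \(m_{\nu,\mu}(u)\in H^{k}_{\theta}\), because \(H^{k}_{\theta}\) is closed under products. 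Summing over \(\mu\le N_\nu\) and \(\nu\le q\), and using \(x^{\nu}\le 1+x^{q}\) for \(x\ge0\) and \(0\le\nu\le q\), yields \eqref{eq4.3} with
\[
C_{\mathcal P,k,n}=\sum_{\nu}N_\nu A^{2\nu}B^{\nu+1}.
\]

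For the Lipschitz bound \eqref{eq4.4}, use the telescoping identity for each monomial. Write \(m_{\nu,\mu}(u)-m_{\nu,\mu}(v)\) as the sum over \(i=1,\dots,\nu\) of the terms obtained by replacing the first \(i-1\) copies of \(u\) with \(v\), putting \(u-v\) in the \(i\)-th slot, and leaving the remaining copies as \(u\). This identity is purely algebraic and needs no commutativity, since the order of factors is preserved in every term. Each term is a product of \(2\nu+1\) elements of \(H^{k}_{\theta}\), so Proposition~\ref{Prop4.2} bounds it by
\[
A^{2\nu}B^{\nu+1}R^{\nu-1}\norm{u-v}_{H^{k}_{\theta}}
\]
whenever \(\norm{u}_{H^{k}_{\theta}},\norm{v}_{H^{k}_{\theta}}\le R\). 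Summing the \(\nu\) terms and then over \(\nu,\mu\) gives \eqref{eq4.4} with
\[
C_{\mathcal P,k,n,R}=\sum_{\nu\ge1}\nu N_\nu A^{2\nu}B^{\nu+1}R^{\nu-1}.
\]
The constant terms with \(\nu=0\) cancel in the difference.

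Genuine obstacles are few. The main care points are, first, that Proposition~\ref{Prop4.2} applies to arbitrary elements of \(H^{k}_{\theta}\), not only to \(\A\); this is already part of its statement. Second, the ordering of noncommutative factors must be respected in the telescoping sum. A minor point is that for \(q\) the convention \(\norm{u}^{0}=1\) handles the constant terms.
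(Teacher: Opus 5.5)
Your proposal is correct and matches the paper's proof: you iterate Proposition~\ref{Prop4.2} over the $2\nu+1$ factors of each monomial to get the growth bound \eqref{eq4.3}, and you combine the order-preserving telescoping identity with Proposition~\ref{Prop4.2} to get the Lipschitz bound \eqref{eq4.4}. Your explicit constants are a harmless refinement; if all coefficients vanish they equal zero, so enlarge them slightly to meet the requirement $C>0$ in the statement.
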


\begin{proof}
The growth estimate follows by repeated use of Proposition~\ref{Prop4.2}. Indeed, for each monomial of degree \(\nu\),
\[
\norm{b_{\nu,\mu,0}u b_{\nu,\mu,1}u\cdots u b_{\nu,\mu,\nu}}_{H^{k}_{\theta}}
\le
A_{k,n}^{2\nu}
\left(\prod_{j=0}^{\nu}\norm{b_{\nu,\mu,j}}_{H^{k}_{\theta}}\right)
\norm{u}_{H^{k}_{\theta}}^{\nu}.
\]
Summing over \(\nu\) and \(\mu\) gives \eqref{eq4.3}.

For the Lipschitz estimate, fix a monomial
\[
M(u)=b_{0}u b_{1}u\cdots u b_{\nu}.
\]
If \(\nu=0\), then \(M(u)-M(v)=0\). If \(\nu\ge1\), the telescopic identity gives
\[
M(u)-M(v)
=
\sum_{j=1}^{\nu}
 b_{0}v b_{1}\cdots v b_{j-1}(u-v)b_{j}u b_{j+1}\cdots u b_{\nu}.
\]
Using Proposition~\ref{Prop4.2} on each term and assuming \(\norm{u}_{H^{k}_{\theta}},\norm{v}_{H^{k}_{\theta}}\le R\), we obtain
\[
\norm{M(u)-M(v)}_{H^{k}_{\theta}}
\le
C_{M,k,n,R}\norm{u-v}_{H^{k}_{\theta}}.
\]
Summing over all monomials proves \eqref{eq4.4}.
\end{proof}

\begin{remark}\label{Rem4.4}
The proof is given in the Hilbert scale because it only uses the twisted Fourier convolution and elementary \(\ell^{1}\)-\(\ell^{2}\) estimates. More general algebra and Moser-type estimates in \(W^{k,p}_{\theta}\), \(1<p<\infty\), can be obtained from the Sobolev and Besov theory on quantum tori. The Hilbert version is sufficient for the polynomial semilinear heat equations considered below and keeps the nonlinear part independent of additional paraproduct machinery.
\end{remark}

\section{Semilinear heat equations}\label{sec:semilinear-heat}

Let \(k\in\N\) satisfy \(k>n/2\). We consider
\begin{equation}\label{eq5.1}
\begin{cases}
\partial_t u+Lu=\mathcal N(u), & t>0,\\
u(0)=u_0,
\end{cases}
\end{equation}
where \(u_0\in H^{k}_{\theta}\) and \(\mathcal N\) is a nonlinear map on \(H^{k}_{\theta}\).

\begin{definition}\label{Def5.1}
Let \(T>0\). A function \(u\in C([0,T];H^{k}_{\theta})\) is called a mild solution of \eqref{eq5.1} on \([0,T]\) if
\[
u(t)=P_tu_0+\int_0^t P_{t-s}\mathcal N(u(s))\,ds,
\qquad 0\le t\le T,
\]
where the integral is understood as a Bochner integral in \(H^{k}_{\theta}\).
\end{definition}

\begin{proof}[Proof of Theorem~\ref{Thm1.2}]
Put \(X=H^{k}_{\theta}\). The heat semigroup is a contraction on \(X\), because
\[
\norm{P_t a}_{H^{k}_{\theta}}^{2}
=
\sum_{m\in\Z^{n}}(1+4\pi^{2}\abs{m}^{2})^{k}e^{-8\pi^{2}\abs{m}^{2}t}\abs{\widehat a(m)}^{2}
\le
\norm{a}_{H^{k}_{\theta}}^{2}.
\]
The strong continuity of \(P_t\) on \(X\) follows from dominated convergence.
Indeed, for \(a\in H^k_\theta\),
\[
\|P_ta-a\|_{H^k_\theta}^2
=
\sum_{m\in\Z^n}
(1+4\pi^2\abs m^2)^k
\abs{e^{-4\pi^2\abs m^2 t}-1}^2
\abs{\widehat a(m)}^2 .
\]
For each fixed \(m\in\Z^n\), the factor
\(\abs{e^{-4\pi^2\abs m^2 t}-1}^2\) tends to \(0\) as \(t\downarrow0\), and it is bounded by \(4\).
Since
\[
\sum_{m\in\Z^n}
(1+4\pi^2\abs m^2)^k\abs{\widehat a(m)}^2<\infty,
\]
the dominated convergence theorem gives
\[
\lim_{t\downarrow0}\|P_ta-a\|_{H^k_\theta}=0.
\]

Fix \(u_0\in X\). Choose \(R>2\norm{u_0}_{X}\). Since \(\mathcal N\) is locally Lipschitz, there exist constants \(L_R,M_R>0\) such that
\[
\norm{\mathcal N(v)-\mathcal N(w)}_{X}
\le
L_R\norm{v-w}_{X},
\qquad
\norm{\mathcal N(v)}_{X}\le M_R
\]
whenever \(\norm{v}_{X},\norm{w}_{X}\le R\). Choose \(T>0\) so small that
\[
\norm{u_0}_{X}+TM_R\le R,
\qquad
TL_R\le \frac{1}{2}.
\]
Let
\[
\mathcal B_R=\left\{u\in C([0,T];X):\sup_{0\le t\le T}\norm{u(t)}_{X}\le R\right\}.
\]
For \(u\in\mathcal B_R\), define
\[
(\Phi u)(t)=P_tu_0+\int_0^tP_{t-s}\mathcal N(u(s))\,ds.
\]
Clearly, \(\Phi\mathcal B_R\subset\mathcal B_R\) since 
$$\|\Phi u\|_{C([0,T];X)}\leq\norm{u_0}_{X}+TM_R\le R,$$
and for \(u,v\in\mathcal B_R\),
\[
\norm{\Phi u-\Phi v}_{C([0,T];X)}
\le
TL_R\norm{u-v}_{C([0,T];X)}
\le
\frac{1}{2}\norm{u-v}_{C([0,T];X)}.
\]
The contraction mapping theorem gives a unique mild solution on \([0,T]\). The same estimate on shorter intervals gives local uniqueness. This completes (i).

We define
\[
T_{\max}
=
\sup\left\{
T>0:\text{there exists a mild solution in }C([0,T];X)
\right\}.
\]
By the local existence result, \(T_{\max}>0\). The local uniqueness just proved
allows us to paste together the local solutions, and hence there is a unique
mild solution on every interval \([0,T]\) with \(0<T<T_{\max}\).

We now prove the blow-up alternative. Suppose that \(T_{\max}<\infty\) and
\[
\sup_{0\le t<T_{\max}}\|u(t)\|_X<\infty.
\]
Then there exists \(R>0\) such that \(\|u(t)\|_X\le R\) for all
\(0\le t<T_{\max}\). Since the local existence time obtained above depends only
on the size of the initial datum in \(X\), there exists \(\tau>0\), depending
only on \(R\), such that for every \(t_0<T_{\max}\) the problem with initial
datum \(u(t_0)\) has a mild solution on \([t_0,t_0+\tau]\). Choosing \(t_0\)
sufficiently close to \(T_{\max}\), we have \(t_0+\tau>T_{\max}\), which extends
the solution beyond \(T_{\max}\). This contradicts the definition of
\(T_{\max}\). Therefore
\[
\lim_{t\to T_{\max}}\|u(t)\|_X=\infty.
\]
This completes (ii).

For continuous dependence, let \(u\) and \(v\) be two mild solutions on \([0,T]\) such that
\[
\|u(t)\|_X,\|v(t)\|_X\le R,
\qquad 0\le t\le T.
\]
If \(L_R\) is a Lipschitz constant for \(\mathcal N\) on the ball of radius \(R\), then the mild formula and the contraction property of \(P_t\) on \(X\) give
\[
\|u(t)-v(t)\|_X
\le
\|u(0)-v(0)\|_X+L_R\int_0^t\|u(s)-v(s)\|_X\,ds.
\]
Gronwall's inequality yields
\[
\sup_{0\le t\le T}\|u(t)-v(t)\|_X
\le e^{L_RT}\|u(0)-v(0)\|_X.
\]
This proves local Lipschitz continuous dependence on the initial datum. This completes (iii).

We prove the smoothing estimate. Let \(0<T<T_{\max}\). From the mild formula and Corollary~\ref{Cor3.7} with \(p=2\),  for \(0<t\le T\),
\[
\begin{aligned}
\norm{u(t)}_{H^{k+1}_{\theta}}
&\le
C_{k,n}\bigl(1+t^{-1/2}\bigr)\norm{u_0}_{H^{k}_{\theta}} \\
&\quad +C_{k,n}\int_0^t\bigl(1+(t-s)^{-1/2}\bigr)\norm{\mathcal N(u(s))}_{H^{k}_{\theta}}\,ds \\
&\le
C_{k,n}\bigl(1+t^{-1/2}\bigr)\norm{u_0}_{H^{k}_{\theta}}
+C_{k,n}M_T\bigl(t+t^{1/2}\bigr).
\end{aligned}
\]
This proves \eqref{eq1.1} and, in particular, \(u(t)\in H^{k+1}_{\theta}\) for every \(t>0\).

It remains only to justify the continuity with values in \(H^{k+1}_{\theta}\) away from \(t=0\). Fix \(0<\varepsilon<T\) and put \(\sigma=\varepsilon/2\). Since \(u(\sigma)\in H^{k+1}_{\theta}\), the mild formula may be restarted at time \(\sigma\):
\[
u(t)=P_{t-\sigma}u(\sigma)+\int_{\sigma}^{t}P_{t-s}\mathcal N(u(s))\,ds,
\qquad \varepsilon\le t\le T.
\]
The first term is continuous in \(H^{k+1}_{\theta}\) by strong continuity of \(P_t\) on that space. For the second one, calling it the Duhamel term, set \(g(s)=\mathcal N(u(s))\). Clearly, \(g\in C([\sigma,T];H^k_\theta)\), and the Corollary~\ref{Cor3.7} gives
\[
\|P_{t-s}g(s)\|_{H^{k+1}_\theta}
\le C_{k,n}\bigl(1+(t-s)^{-1/2}\bigr)\sup_{\sigma\le r\le T}\|g(r)\|_{H^k_\theta}.
\]
The right-hand side is integrable in \(s\) near \(s=t\). Splitting the integral into \([\sigma,t-\eta]\) and \([t-\eta,t]\), using strong continuity on the first part and then letting \(\eta\downarrow0\) on the second part, proves the continuity of the Duhamel term in \(H^{k+1}_{\theta}\). Hence \(u\in C([\varepsilon,T];H^{k+1}_{\theta})\), and since \(\varepsilon>0\) is arbitrary, \(u\in C((0,T];H^{k+1}_{\theta})\). This completes (iv).

Finally, if \(u_0\in H^{k+r}_{\theta}\) and \(\mathcal N\) is locally Lipschitz on \(H^{k+r}_{\theta}\), the fixed-point argument can be repeated with \(X=H^{k+r}_{\theta}\). Uniqueness in \(H^{k}_{\theta}\) identifies the two solutions. This proves the persistence of higher regularity in (v) and the proof is complete.
\end{proof}

\begin{corollary}\label{Cor5.2}
Let \(k\in\N\) satisfy \(k>n/2\), and let \(\mathcal P\) be the noncommutative polynomial defined in \eqref{eq4.2} with coefficients in \(H^{k}_{\theta}\). Then, for every \(u_0\in H^{k}_{\theta}\), the polynomial semilinear heat equation
\[
\partial_t u+Lu=\mathcal P(u),
\qquad
u(0)=u_0,
\]
has a unique local mild solution in \(C([0,T];H^{k}_{\theta})\) (where "local" corresponds to "local time"). It satisfies the blow-up alternative, continuous dependence on the initial datum, and the smoothing estimate in Theorem~\ref{Thm1.2}.
\end{corollary}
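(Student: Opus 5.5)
The corollary follows by specializing Theorem~\ref{Thm1.2} to \(\mathcal N=\mathcal P\). The only hypothesis of that theorem to be checked is that \(\mathcal P\) maps \(H^{k}_{\theta}\) into itself and is locally Lipschitz there. This is exactly the content of Proposition~\ref{Prop4.3}, which rests on the Banach algebra property of Proposition~\ref{Prop4.2} for \(k>n/2\).

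I would carry it out in the following order. First, fix the coefficients \(b_{\nu,\mu,j}\in H^{k}_{\theta}\). By \eqref{eq4.3}, \(\mathcal P(u)\in H^{k}_{\theta}\) for every \(u\in H^{k}_{\theta}\). By \eqref{eq4.4}, for each \(R>0\) there is a Lipschitz constant \(C_{\mathcal P,k,n,R}\) on the ball of radius \(R\). Hence \(\mathcal N:=\mathcal P\) satisfies the standing assumption of Theorem~\ref{Thm1.2}.

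Next, I would invoke the parts of that theorem one at a time:
\begin{enumerate}[\rm(i)]
\item part (i) gives the existence and uniqueness of a mild solution \(u\in C([0,T];H^{k}_{\theta})\) for some \(T>0\) depending on \(\|u_0\|_{H^k_\theta}\);
\item part (ii) gives the maximal time \(T_{\max}\) and the blow-up alternative;
\item part (iii) gives the Lipschitz dependence on the initial datum within a fixed ball;
\item part (iv) gives the smoothing estimate \eqref{eq1.1}.
\end{enumerate}

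In \eqref{eq1.1}, the quantity \(M_T\) is finite and can be made explicit. For any \(0<T<T_{\max}\), continuity of \(u\) on the compact interval \([0,T]\) gives \(\sup_{[0,T]}\|u(s)\|_{H^k_\theta}=:R_T<\infty\). Then \eqref{eq4.3} yields
\[
M_T\le C_{\mathcal P,k,n}\bigl(1+R_T^{q}\bigr).
\]

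I do not expect a genuine obstacle. The only point needing care is that the fixed-point argument in Theorem~\ref{Thm1.2} uses a uniform bound \(M_R\) for \(\|\mathcal N(v)\|\) on balls, alongside the Lipschitz bound. For a polynomial this bound is supplied directly by \eqref{eq4.3}; in any case it follows from local Lipschitz continuity together with \(\|\mathcal N(v)\|\le\|\mathcal N(0)\|+L_R\|v\|\). With that checked, the corollary is a direct specialization.
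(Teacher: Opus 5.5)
Your proposal is correct and follows the same route as the paper: it invokes Proposition~\ref{Prop4.3} to obtain that \(\mathcal P:H^{k}_{\theta}\to H^{k}_{\theta}\) is locally Lipschitz, and then applies Theorem~\ref{Thm1.2} with \(\mathcal N=\mathcal P\). Your additional checks are valid but already implicit in that theorem: the ball bound \(M_R\) follows from local Lipschitz continuity, and \(M_T<\infty\) follows via \eqref{eq4.3}.
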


\begin{proof}
By Proposition~\ref{Prop4.3}, the map \(\mathcal P:H^{k}_{\theta}\to H^{k}_{\theta}\) is locally Lipschitz. The conclusion follows from Theorem~\ref{Thm1.2}.
\end{proof}

\begin{corollary}\label{Cor5.3}
Let \(k\in\N\) satisfy \(k>n/2\). Assume that all coefficients of \(\mathcal P\) in \eqref{eq4.2} belong to \(H^{k+r}_{\theta}\) for every \(r\in\N\). If \(u_0\in H^{k}_{\theta}\), then the corresponding mild solution satisfies
\[
u\in C((0,T];H^{k+r}_{\theta})
\]
for every \(r\in\N\) and every \(0<T<T_{\max}\).
\end{corollary}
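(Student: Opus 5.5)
The plan is to bootstrap regularity one derivative at a time, starting from the smoothing estimate in Theorem~\ref{Thm1.2}(iv). Note first that, since every coefficient lies in \(H^{k+r}_{\theta}\) for every \(r\), Proposition~\ref{Prop4.3} applied with \(k\) replaced by \(k+r\) (legitimate since \(k+r>n/2\)) shows that \(\mathcal P\) is locally Lipschitz from \(H^{k+r}_{\theta}\) into itself for each \(r\in\N\). I will prove by induction on \(r\ge 1\) the statement
\[
(S_r):\qquad u\in C\bigl((0,T];H^{k+r}_{\theta}\bigr)\quad\text{for every }0<T<T_{\max}.
\]
The case \(r=1\) is exactly Theorem~\ref{Thm1.2}(iv), applied with \(\mathcal N=\mathcal P\).

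For the inductive step, assume \((S_r)\) and fix \(0<\varepsilon<T<T_{\max}\). Put \(\sigma=\varepsilon/2\); then \(u(\sigma)\in H^{k+r}_{\theta}\). Consider the time-shifted function \(w(t)=u(\sigma+t)\), which is a mild solution on \([0,T-\sigma]\) with initial datum \(u(\sigma)\). This follows from the semigroup property applied to the mild formula, i.e. the same restart identity already used in the proof of Theorem~\ref{Thm1.2}(iv).

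Now apply Theorem~\ref{Thm1.2} in the space \(H^{k+r}_{\theta}\) in place of \(H^k_{\theta}\); its hypotheses hold since \(k+r>n/2\) and \(\mathcal P\) is locally Lipschitz there. Part (iv) at level \(k+r\) then gives that the \(H^{k+r}\)-solution with datum \(u(\sigma)\) lies in \(C((0,T'];H^{k+r+1}_{\theta})\) for every \(T'\) below its maximal time.

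The main obstacle is to check that this \(H^{k+r}\)-solution coincides with \(w\) and that its maximal existence time is at least \(T-\sigma\). Uniqueness in \(H^k_\theta\) (Theorem~\ref{Thm1.2}(i)) identifies the two solutions wherever both exist. To rule out earlier blow-up in \(H^{k+r}\), I would use \((S_r)\): \(w\) is continuous on \([0,T-\sigma]\) with values in \(H^{k+r}_{\theta}\), hence bounded there. The blow-up alternative (ii) at level \(k+r\) then forces the \(H^{k+r}\) maximal time to exceed \(T-\sigma\).

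Consequently \(u\in C([\sigma+\eta,T];H^{k+r+1}_{\theta})\) for every small \(\eta>0\), and in particular on \([\varepsilon,T]\). Since \(\varepsilon\) is arbitrary, \((S_{r+1})\) follows, which completes the induction and the proof.
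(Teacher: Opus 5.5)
Your proof is correct and is essentially the paper's argument: restart the mild solution at a positive time, use Proposition~\ref{Prop4.3} to get local Lipschitz continuity of \(\mathcal P\) at the higher Sobolev level, apply the one-derivative smoothing of Theorem~\ref{Thm1.2}(iv), and iterate. You also use uniqueness in \(H^k_\theta\) together with the blow-up alternative at level \(k+r\) to show that the higher-level solution exists beyond \(T-\sigma\), which makes explicit a step the paper's terse proof leaves implicit.
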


\begin{proof}
The Theorem~\ref{Thm1.2} gives \(u\in C((0,T];H^{k+1}_{\theta})\). Fix \(\varepsilon>0\). Taking \(u(\varepsilon)\) as new initial data and using Proposition~\ref{Prop4.3} in the space \(H^{k+1}_{\theta}\), we obtain \(u\in C([\varepsilon,T];H^{k+1}_{\theta})\) and then the one-derivative smoothing estimate gives \(u\in C((\varepsilon,T];H^{k+2}_{\theta})\). Iterating this argument proves the claim.
\end{proof}

\begin{remark}\label{Rem5.4}
The fixed-point part of the Theorem~\ref{Thm1.2} only uses the boundedness and strong continuity of \(P_t\) on \(H^{k}_{\theta}\). The higher-order heat estimates enter through the smoothing bound \eqref{eq1.1}. Thus the estimates of Section~\ref{sec:main-estimates} provide quantitative parabolic regularization for nonlinear equations on the noncommutative torus.
\end{remark}

\section*{Acknowledgments}
The authors would like to thank Professor Quanhua Xu and Professor Xiao Xiong from the Institute for Mathematical Sciences, Harbin Institute of Technology, for their kind hospitality during the visit to Harbin Institute of Technology. 

\medskip
{\bf Funding:} This work is supported by the National Natural Science Foundation of China\linebreak (12301145, 12261107, 12561020) and by Yunnan Fundamental Research Projects\linebreak (202401AU070123).

\medskip
{\bf Data availability:}  Data sharing is not applicable to this article as no new data were created or analyzed in this study.

\medskip
{\bf Conflict of interest:} The authors declare that there is no conflict of interest.

\end{document}